\documentclass{amsart}
\usepackage[margin=1.5in]{geometry}
\usepackage{amsfonts}
\usepackage{lineno,hyperref}
\usepackage{amsmath}
\usepackage{amssymb}
\usepackage{amsthm}
\usepackage{mathtools}
\usepackage{cancel}
\usepackage{tikz-cd}
\usepackage{arydshln}
\usepackage{graphicx}
\usepackage{float}
\usepackage{algorithm}
\usepackage[noend]{algpseudocode}
\usepackage{thmtools}
\usepackage{thm-restate}
\usepackage{hyperref}
\usepackage{cleveref}

\newcommand{\RR}{\mathbb{R}}

\newcommand{\NN}{\mathbb{N}}

\newcommand{\ZZ}{\mathbb{Z}}
\newcommand{\QQ}{\mathbb{Q}}

\theoremstyle{plain}
\newtheorem{definition}{Definition}[section]
\newtheorem{theorem}{Theorem}[section]
\newtheorem{conjecture}[theorem]{Conjecture}

\newtheorem{lemma}[theorem]{Lemma}

\theoremstyle{remark}

\newtheorem{alg}{Algorithm}[section]
\newtheorem{remark}{Remark}[section]

\begin{document}
\title{The Ulam Sequence of the Integer Polynomial Ring}
\author{Arseniy (Senia) Sheydvasser}
\address{92 Mount Vickery Road, Southborough, MA 01772}
\email{sheydvasser@gmail.com}

\subjclass[2010]{Primary 11Y55,	11B83, 11U10}

\date{\today}

\keywords{Additive number theory, Ulam sequence, computational number theory}

\begin{abstract}
An Ulam sequence $U(1,n)$ is defined as the sequence starting with integers $1,n$ such that $n > 1$, and such that every subsequent term is the smallest integer that can be written as the sum of distinct previous terms in exactly one way. This family of sequences is notable for being the subject of several remarkable rigidity conjectures. We introduce an analogous notion of an Ulam sequence inside the polynomial ring $\ZZ[X]$, and use it both to give new, constructive proofs of old results as well as producing a new conjecture that implies many of the other existing conjectures.
\end{abstract}

\maketitle

\section{Introduction and Main Results:}\label{section: introduction}

\begin{figure}
\centering
\includegraphics[width=0.7\textwidth]{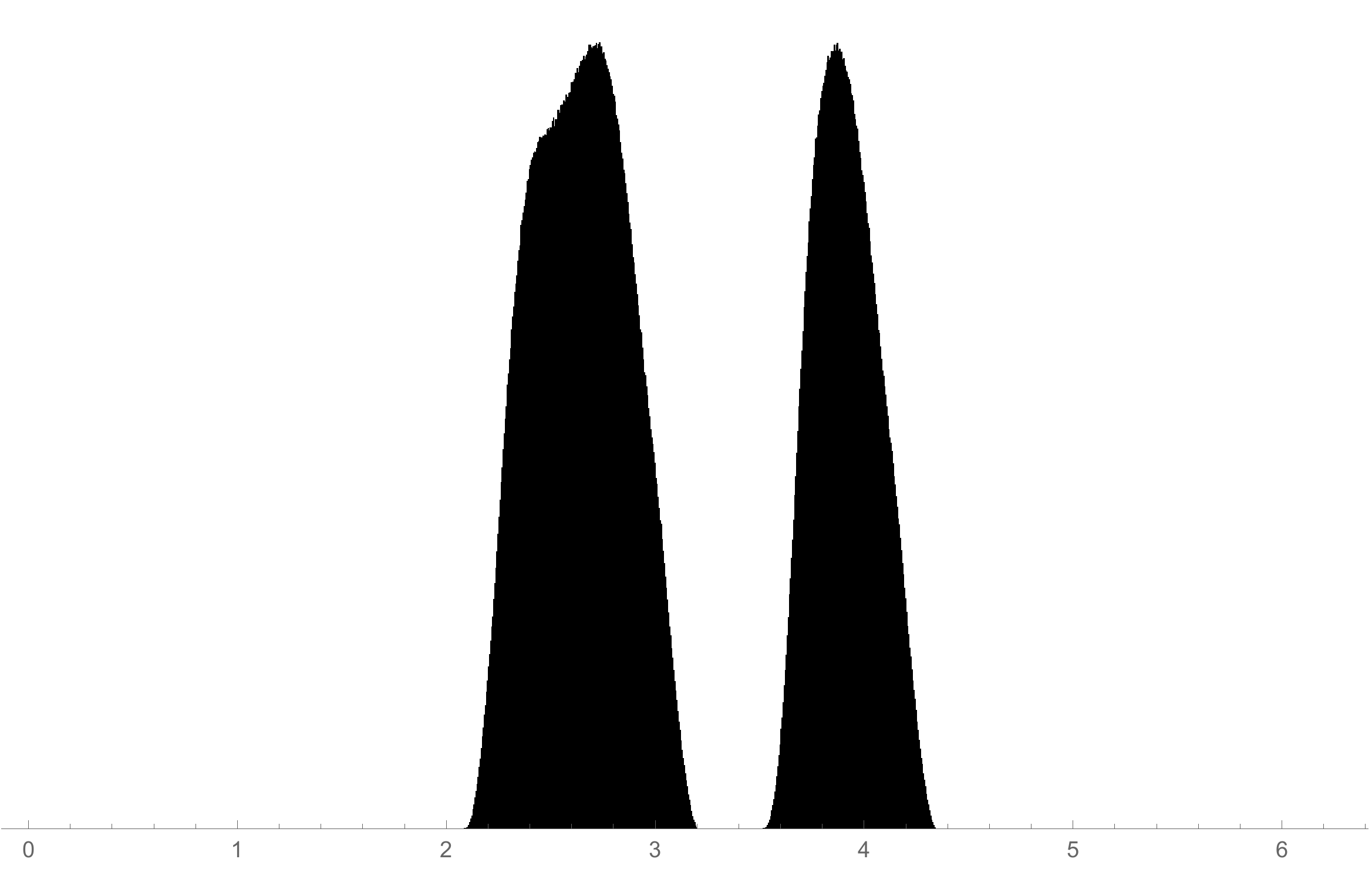}
\caption{Histogram of the first billion terms of $U(1,2) \mod \lambda_2$, rescaled into the interval $[0,2\pi]$.}
\label{First Histogram}
\end{figure}

Given integers $1 \leq a < b$, define the \emph{Ulam sequence} $U(a,b)$ to be the sequence starting with $a,b$, and such that every subsequent term is the smallest integer that can be written as the sum of two distinct prior terms in exactly one way. The sequence
	\begin{align*}
	U(1,2) &= 1, 2, 3, 4, 6, 8, 11, 13, 16, 18, 26, 28, 36, 38, 47, 48, 53, 57, 62, 69\ldots
	\end{align*}
	
\noindent was originally introduced in 1964 by Ulam \cite{ulam_1964}, who posed the question of determining the growth rate of this sequence, which remains opens to this day. It is conjectured that $U(1,2)$ grows linearly, and it has positive density of about $0.079$. The growth rate of certain other families of Ulam sequences was confirmed to be linear by proving the stronger result that they are eventually periodic---this was done for Ulam sequences $U(2,2n + 1)$ by Schmerl and Spiegel \cite{schmerl_spiegel_1994}, using previous work of Finch \cite{finch_1991,finch_1992_1,finch_1992_2} and Queneau \cite{queneau_1972}. Similarly, Cassaigne and Finch \cite{cassaigne_finch_1995} proved that sequences $U(4,n)$ are eventually periodic if $n \equiv 1 \mod 4$, and Hinman, Kuca, Schlesinger, and Sheydvasser \cite{HKSS_2019_2} found a finite set of sequences $U(4,n)$ with $n \equiv 3 \mod 4$ that are also eventually periodic. In contrast, none of the sequences of the form $U(1,n)$ seem to be eventually periodic, and virtually nothing was known about them until very recently, when Steinerberger \cite{steinerberger_2016} gave numerical evidence that there exists a real number $\lambda_2 \approx 2.443442967784743433$ with the curious property that $U(1,2) \mod \lambda_2$ is concentrated in the middle third of the interval. To be more precise, we have the following conjecture, formulated by Gibbs \cite{Gibbs_2015}.
	\begin{conjecture}\label{conj:gibbs}
	There exists a real number $\lambda_2 \approx 2.443442967784743433$ such that for all $\epsilon > 0$, for $K$ sufficiently large,
		\begin{align*}
		U(1,2) \cap [K,\infty) \mod \lambda_2 \subset \left(\frac{\lambda_2}{3} - \epsilon, \frac{2\lambda_2}{3} + \epsilon\right).
		\end{align*}
	\end{conjecture}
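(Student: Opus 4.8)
The plan is to reduce \Cref{conj:gibbs} to a structural statement about the polynomial Ulam sequence $U(1,X) \subset \ZZ[X]$ and then to attack that statement by a self-consistency argument on exponential sums. On the analytic side, set $\alpha = 2\pi/\lambda_2$ and encode $U = U(1,2)$ through the sums $S_N = \sum_{u \in U,\ u \le N} e^{i\alpha u}$. A sufficiently strong form of the assertion ``$|S_N|$ grows linearly in $|U \cap [1,N]|$ with argument tending to $\pi$'' implies \Cref{conj:gibbs}, since a negative linear-size Fourier coefficient at frequency $\alpha$ forces the points $\alpha u \bmod 2\pi$ to cluster near $\pi$, i.e.\ $u \bmod \lambda_2$ near $\lambda_2/2$, the center of the middle third. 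The only combinatorial input is the defining recursion: writing $r(n)$ for the number of representations $n = a+b$ with $a < b$ and $a,b \in U$, one has $r(n) = 1$ exactly when $n \in U$ (past the first few terms) and $r(n) \ne 1$ otherwise. Summing $r(n)e^{i\alpha n}$ against this dichotomy and using $\sum_n r(n) e^{i\alpha n} = \tfrac12\big(S_N^2 - \sum_{u \le N} e^{2 i \alpha u}\big) + (\text{boundary})$ converts the recursion into an approximate functional equation relating $S_N$ to itself and to an error term $E_N = \sum_{n \notin U} r(n) e^{i\alpha n}$ coming from the non-uniquely-represented integers.

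The role of the polynomial ring is to explain where $\alpha$ (equivalently $\lambda_2$) comes from, and to make the above picture rigid. Because addition in $\ZZ[X]$ of polynomials with small non-negative coefficients is carry-free, $U(1,X)$ should admit an explicit or finitely-described recursive structure; I would use whatever such description the paper establishes to show (i) that $U(1,2)$ is obtained from $U(1,X)$ by a controlled specialization $X \mapsto 2$, and (ii) that $\lambda_2$ emerges as a renormalization constant of the polynomial dynamics — built, for instance, from the asymptotic growth of coefficient vectors under the ``append the next term'' map — so that concentration of $U(1,2) \bmod \lambda_2$ is the shadow, under specialization, of an exact periodicity or self-similarity that is literally visible in $\ZZ[X]$. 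This is the step in which the paper's framework is meant to do real work, and it is also where one expects to recover the other rigidity conjectures uniformly.

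The hard part — and the reason this remains open — is controlling $E_N$ and closing the bootstrap. One must show that the ``bad'' representations contribute only lower order at the single frequency $\alpha$ (equivalently, that they are themselves equidistributed enough to cancel there), and then show the resulting functional equation selects the linear-growth solution rather than the trivial $o(N)$ one; both demand quantitative control of correlations of the indicator of $U$ that the recursion does not hand you for free, and even the unconditional existence of $\lambda_2$ sits at this level of difficulty. A realistic intermediate target, and the natural output of the polynomial viewpoint, is to prove the analogous concentration for $U(1,X)$ itself, where carry-freeness should make $E_N$ genuinely negligible, and then to isolate precisely which equidistribution statement about base-$2$ carries is needed to transport the result to $U(1,2)$; that missing input is, in effect, the content of \Cref{conj:gibbs}.
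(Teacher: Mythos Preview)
The statement you are trying to prove is labelled \emph{Conjecture} in the paper, and the paper does not prove it; it is presented as Gibbs' open conjecture, supported by numerical evidence up to a trillion terms. There is no ``paper's own proof'' to compare against. What the paper does prove is that two other conjectures (the rigidity Conjecture~\ref{conj:rigidity} and the polynomial-modulus Conjecture~\ref{conj:stat}) together imply the \emph{generalized} version, Conjecture~\ref{conj:gibbslike}, but only for $n \ge 4$; the case $n = 2$ is not touched.

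Your proposal also has a substantive structural error in how it invokes the paper's framework. The map $X \mapsto n$ does \emph{not} send $U(1,X)$ to $U(1,n)$ for small $n$: the paper's Theorem~\ref{Main Theorem} only guarantees $\mathrm{eval}_n(U(1,X) \cap [1,c_kX+d_k]) = U(1,n) \cap [1,c_kn+d_k]$ for $n$ above a threshold $N(k)$ that the algorithm outputs, and Conjecture~\ref{conj:rigidity} asserts the full identity only for $n \ge 4$. The specialization $X \mapsto 2$ is precisely outside the regime where the polynomial model is known or even conjectured to agree with the integer sequence, so step (i) of your plan fails. Relatedly, the talk of ``carry-free addition'' and ``base-$2$ carries'' misreads $\ZZ[X]$: the ordering is lexicographic on coefficients, not a positional numeral system, and nothing in the paper suggests that the obstruction at $n=2$ is a carry phenomenon.

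On the analytic side, your exponential-sum sketch is essentially Steinerberger's original heuristic, and you correctly identify where it stalls: controlling the contribution $E_N$ from multiply-represented integers at the single frequency $\alpha$ is exactly the open problem, and the functional equation does not by itself rule out the $o(N)$ branch. That part is an honest assessment of the difficulty, but it is a description of why the conjecture is open rather than a path to a proof.
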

	
\noindent This conjecture has since been confirmed for the first trillion terms of $U(1,2)$ by Gibbs and McCranie \cite{Gibbs_Judson_2015}. Numerical evidence suggests that for Ulam sequences $U(a,b)$ that are not eventually periodic, similar behavior occurs---such ``magic numbers" for Ulam sequences are referred to as \emph{periods} in the literature. In particular, there is the following generalization of Gibbs' conjecture in the mathematical folklore\footnote{It was communicated to the author by Joshua Hinman, who did an extensive numerical study of periods of various families of Ulam sequences.}.

\begin{conjecture}\label{conj:gibbslike}
For all $n \geq 2$, there exist real numbers $\lambda_n$, $K_n$ such that for all $\epsilon > 0$,
	\begin{align*}
	U(1,n) \cap [K_n,\infty) \mod \lambda_n \subset \left(\frac{\lambda_n}{3} - \epsilon, \frac{2\lambda_n}{3} + \epsilon\right).
	\end{align*}
	
\noindent Furthermore, for $n \geq 4$, we can take $\lambda_n = 3n + \lambda'$, where $\lambda' \approx 0.417031$.
\end{conjecture}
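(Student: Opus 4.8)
Since \Cref{conj:gibbslike} is, as posed, a conjecture, what follows is the shape of a proof. The plan is to not attack $U(1,n)\subset\ZZ$ directly but to prove the required rigidity once, inside $\ZZ[X]$, for the Ulam sequence $\mathcal{U}$ introduced above, and then transport it to each $n\ge 4$ through the evaluation homomorphism $\mathrm{ev}_n\colon\ZZ[X]\to\ZZ$, $X\mapsto n$. The reason to expect this to work is that $\ZZ[X]$ has essentially no accidental coincidences among small terms: if two distinct pairs $aX+b$, $a'X+b'$ have equal polynomial sum then $a+a'$ and $b+b'$ already agree, whereas in $\ZZ$ the single relation $an+b=a'n+b'$ can be solved for $n$. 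Thus, while $U(1,n)$ may skip a term because at that particular $n$ it happens to be representable in two ways, $\mathcal{U}$ never does, and for $n$ large the two sequences should coincide.

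The first step is a comparison lemma: for $n$ greater than an explicit $N_0$, $\mathrm{ev}_n$ restricts to an order-preserving bijection $\mathcal{U}\to U(1,n)$. One proves this by induction along the sequence — a term has a unique representation as a sum of two distinct smaller terms in $\ZZ[X]$ exactly when its image does in $\ZZ$, the only obstruction being a spurious identity $an+b=a'n+b'$ with $(a,b)\ne(a',b')$, which forces $n\le|b-b'|/|a-a'|\le N_0$, where $N_0$ bounds the coefficients occurring among the finitely many terms involved. The real work here is to make $N_0$ effective and to verify $N_0<4$; the cases $n=2,3$ behave genuinely differently — their periods are not of the form $3n+\lambda'$ — and have to be handled by hand, \Cref{conj:gibbs} in particular being obtained from such a separate small-parameter analysis.

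It then remains to prove a structure theorem for $\mathcal{U}$. Its first half is easy: the standard argument that an Ulam sequence never terminates — the sum of its two largest current terms is always uniquely representable — shows $\mathcal{U}$ is infinite, and since a sum of two linear polynomials is again linear, no polynomial of degree $\ge 2$ can ever enter; so $\mathcal{U}$ is the constant $1$ together with infinitely many linear terms $u(X)=a_uX+b_u$. The hard half is to show that, for all but finitely many of these, $a_u\not\equiv 0\pmod 3$ and the residual $b_u-\tfrac13 a_u\lambda'$ lies, modulo $3n+\lambda'$, in $[0,n+\tfrac13\lambda']$ when $a_u\equiv1\pmod 3$ and in the reflected interval when $a_u\equiv2\pmod 3$, where $\lambda'\approx 0.417031$ should emerge as the period of a residual one-dimensional Ulam-type dynamics on the sequence $(b_u)$. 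Granting it and setting $\lambda_n:=3n+\lambda'$, write $a_u=3k+s$ with $s\in\{1,2\}$; the identity
\[
\mathrm{ev}_n(u)=a_un+b_u=k\lambda_n+\frac{s}{3}\lambda_n+\Bigl(b_u-\frac{a_u\lambda'}{3}\Bigr)
\]
shows $\mathrm{ev}_n(u)\bmod\lambda_n$ equals $\tfrac13\lambda_n$ or $\tfrac23\lambda_n$ plus a residual that, by hypothesis, never carries it out of $[\tfrac13\lambda_n,\tfrac23\lambda_n]$. With $K_n$ taken past the exceptional and small-coefficient terms this is exactly \Cref{conj:gibbslike} for $n\ge4$, with $\epsilon$ free.

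The entire difficulty is the hard half of that structure theorem, and I see no unconditional route to it: eventual avoidance of $0\bmod 3$ by the $a_u$ and, above all, the sharp confinement of $b_u-\tfrac13 a_u\lambda'$ are exactly the rigidity that is open for $U(1,2)$, merely relocated to $\ZZ[X]$. What the polynomial formalism buys is that all of it can be packaged as a single clean conjecture about the one sequence $\mathcal{U}$, from which \Cref{conj:gibbs} and \Cref{conj:gibbslike} follow as above, and from which I would also expect the eventual-periodicity theorems of Schmerl--Spiegel and Cassaigne--Finch to fall out as further specializations; so the realistic outcome of this program is to isolate that conjecture rather than to settle it. The one loose end that is merely technical, not deep, is sharpening $N_0$ below $4$ and clearing $n=2,3$ directly.
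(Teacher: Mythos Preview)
Your overall architecture matches the paper's: reduce \Cref{conj:gibbslike} for $n\ge 4$ to two statements about $U(1,X)$ --- the comparison $\mathrm{ev}_n(U(1,X))=U(1,n)$ (the paper's \Cref{conj:rigidity}) and a confinement of the coefficients modulo $3X+\lambda'$ (the paper's \Cref{conj:stat}) --- and then push the identity $a_un+b_u=\lfloor a_u/3\rfloor\lambda_n+\tfrac{s}{3}\lambda_n+(b_u-\tfrac{a_u\lambda'}{3})$ through. That part is right and is exactly what the paper does.

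There are, however, two genuine gaps. First, you describe $\mathcal{U}$ as ``the constant $1$ together with infinitely many linear terms $u(X)=a_uX+b_u$'', and your derivation treats elements one at a time. But $U(1,X)$ is a union of \emph{intervals} in the lexicographic order, and the long ones such as $[X,2X]$ or $[4X+2,5X-1]$ contain infinitely many polynomials; the paper's confinement conjecture is stated only for the \emph{endpoints} $a_iX+b_i$ and $c_iX+d_i$. Consequently, after evaluating at $n$ you still owe an argument that every integer strictly between $a_in+b_i$ and $c_in+d_i$ also lands in the middle third modulo $\lambda_n$. The paper handles this separately, using that long intervals have $c_i=a_i+1$ with $(a_i,c_i)\equiv(1,2)\pmod 3$, so the whole block sits between the two endpoint residues without wrapping around; your write-up never confronts this. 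Second, your sketch of the comparison lemma asserts a uniform $N_0$ from the bound $n\le|b-b'|/|a-a'|$, but that quotient is taken over infinitely many pairs as the sequence grows, so finiteness of the supremum is not automatic --- it is precisely \Cref{conj:rigidity}, which the paper leaves open rather than proving.

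One smaller point: the eventual-periodicity theorems of Schmerl--Spiegel and Cassaigne--Finch concern $U(2,2n+1)$ and $U(4,n)$, not $U(1,n)$, and the paper explicitly notes that the Ulam sequence in $\ZZ[X]$ starting with $2,X$ is \emph{not} unique. So those results are not specializations of anything about $U(1,X)$, and you should drop that claim.
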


\noindent The observed empirical fact that for $n \geq 4$, the periods $\lambda_n$ grow linearly has been poorly understood up until now. Our goal is to show that this curious phenomenon is deeply tied to the following---seemingly unconnected---numerical observation of Hinman, Kuca, Schlesinger, and the present author \cite{HKSS_2019_1}. Specifically, we noted that for $n \geq 4$, runs of consecutive elements of $U(1,n)$ group into blocks whose endpoints grow linearly.
	\begin{align*}
	\begin{array}{rllllll}
	U(1,4)= & \boxed{1}, & \boxed{4, 5, 6, 7, 8}, & \boxed{10}, & \boxed{16}, & \boxed{18, 19}, & \boxed{21} \ldots \\
	U(1,5)= & \boxed{1}, & \boxed{5, 6, 7, 8, 9, 10}, & \boxed{12}, & \boxed{20}, & \boxed{22, 23, 24}, & \boxed{26} \ldots \\
	U(1,6)= & \boxed{1}, & \boxed{6, 7, 8, 9, 10, 11, 12}, & \boxed{14}, & \boxed{24}, & \boxed{26, 27, 28, 29}, &\boxed{31} \ldots \\
	U(1,n)= & \boxed{1}, & \boxed{n, n + 1, \ldots 2n}, & \boxed{2n + 2}, & \boxed{4n}, & \boxed{4n + 2 \ldots 5n - 1}, & \boxed{5n + 2} \ldots
	\end{array}
	\end{align*}
	
\noindent This observation was made precise by the following conjecture.

\begin{conjecture}\label{conj:rigidity}
There exist unique integer coefficients $a_i,b_i,c_i,d_i$ such that for all $n \geq 4$,
	\begin{align*}
	U(1,n) = \bigcup_{i = 0}^\infty [a_i n + b_i, c_i n + d_i],
	\end{align*}
	
\noindent such that $a_{i + 1}n + b_{i + 1} > c_i n + d_i n + 1$ for all $i$.
\end{conjecture}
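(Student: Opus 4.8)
The plan is to work not with the integer Ulam sequence directly, but with its polynomial analogue inside $\ZZ[X]$, which we introduce in the body of the paper and which we expect behaves more rigidly than the $\ZZ$-valued sequences. Concretely, I would define a sequence $\mathcal{U}(1,X)$ of polynomials, ordered by a term order that mimics ``$n \to \infty$'' (so $aX + b \prec cX + d$ iff $a < c$, or $a = c$ and $b < d$), where each new term is the $\prec$-smallest polynomial representable as a sum of two distinct previous terms in exactly one way. The first task is to show this polynomial sequence is well-defined: that ``exactly one representation'' makes sense despite $\ZZ[X]$ not being totally ordered by $\prec$ in a way compatible with the specialization $X \mapsto n$, and that for each degree-$\leq 1$ polynomial the representation count stabilizes. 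This is where I would carefully set up a \emph{specialization principle}: there should be an effective bound $N_0$ (coming from the finitely many ``collision'' equations among linear polynomials) such that for all $n \geq N_0$, the map $p(X) \mapsto p(n)$ carries $\mathcal{U}(1,X)$ bijectively and order-isomorphically onto an initial segment of $U(1,n)$, and moreover every element of $U(1,n)$ below $c n + d$ arises this way.

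Granting that, Conjecture~\ref{conj:rigidity} follows by organizing $\mathcal{U}(1,X)$ into blocks. I would prove that the polynomial sequence itself is eventually periodic in a suitable sense — i.e., the \emph{shape} of the blocks (the vector of gaps $a_{i+1} - c_i$ and the internal structure $b_i, c_i, d_i$) is eventually governed by a finite-state recursion, exactly analogous to the eventual periodicity results of Schmerl--Spiegel and Cassaigne--Finch for the $U(2,\cdot)$ and $U(4,\cdot)$ families, but now one dimension up. The key mechanism: once the sequence has a long enough run, the set of ``one-way-sum'' constraints that determine the next several terms depends only on a bounded window of recent blocks (because sums involving the early small terms like $1$ and $X$ have bounded effect, and sums of two large terms land predictably), so the block-generating process enters a cycle. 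The uniqueness of the integer coefficients $a_i, b_i, c_i, d_i$ then comes for free from uniqueness of $\mathcal{U}(1,X)$ together with the specialization isomorphism, and the separation condition $a_{i+1}n + b_{i+1} > c_i n + d_i + 1$ is just the statement that consecutive polynomial blocks are $\prec$-separated with a genuine gap, which one reads off from the block recursion.

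The key steps, in order: (1) define $\mathcal{U}(1,X)$ and prove well-definedness via the stabilization of representation counts for linear polynomials; (2) prove the specialization theorem — there exists effective $N_0$ so that for $n \geq N_0$, $U(1,n)$ agrees termwise with $\mathcal{U}(1,X)|_{X = n}$ up to any fixed height, using an induction on terms where the inductive step is a finite case-check on linear Diophantine collisions; (3) establish that the block data of $\mathcal{U}(1,X)$ satisfies an eventually-periodic recursion, by showing the next block is a function of a bounded-length suffix of the block sequence; (4) read off Conjecture~\ref{conj:rigidity}, including uniqueness and the gap inequality, from (2) and (3). I expect step (3) to be the main obstacle — it is the genuine combinatorial core and is morally as hard as the classical periodicity theorems; in particular one must rule out the possibility that arbitrarily far out a sum of two ``old'' large terms interacts with a sum involving a small term in a way that breaks the finite-window dependence. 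Controlling those cross-interactions — showing that only boundedly many of them are ever ``active'' near the frontier — is the crux, and I would attack it by a potential/weight argument bounding how the multiplicity function can change from one block to the next. Step (2), while conceptually the novelty of the paper, I expect to be routine modulo bookkeeping once the right $N_0$ is isolated; step (1) is a short sanity check; step (4) is immediate.
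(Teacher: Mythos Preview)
The statement you are attempting to prove is Conjecture~\ref{conj:rigidity}, and the paper does \emph{not} prove it --- it is explicitly flagged as ``still wide open.'' Your steps (1) and (2) are essentially what the paper actually carries out: it defines $U(1,X)$ in the ordered ring $\ZZ[X]$, proves it is well-defined and a $DS$-subset, and establishes the specialization principle (Theorem~\ref{Main Theorem}) that for each $k$ there is an $N = N(k)$ such that $\text{eval}_n(U(1,X))$ agrees with $U(1,n)$ on $[1,c_kn+d_k]$ for all $n \geq N$. Crucially, this $N$ depends on $k$; the paper's method gives no uniform $N_0$, and that is exactly the gap between Theorem~\ref{Weak Rigidity Theorem} and Conjecture~\ref{conj:rigidity}.

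Your step (3) is where the plan breaks. You propose to close the gap by proving that the block-generating process for $U(1,X)$ is eventually governed by a finite-state recursion, in analogy with Schmerl--Spiegel and Cassaigne--Finch. But the paper's own numerical evidence strongly suggests this is \emph{false}: Conjecture~\ref{conj:stat} and the histograms in Figures~\ref{Ulam Statistics}, \ref{Coefficient Distributions Long}, \ref{Coefficient Distributions Short} indicate that $U(1,X) \bmod (3X + \lambda')$ has a continuous-looking residue distribution, exactly the signature of the aperiodic $U(1,2)$-type behavior, not the periodic $U(2,\cdot)$ or $U(4,\cdot)$ behavior. The classical periodicity proofs you invoke hinge on the sequence being ``regular'' (finitely many even terms), a structural feature the $U(1,n)$ family lacks. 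So the bounded-window claim --- that the next block of $U(1,X)$ is determined by a bounded suffix of prior blocks --- is not merely hard but almost certainly wrong, and with it your route to a uniform $N_0$ collapses. Even if (3) held, you would still only obtain $n \geq N_0$ rather than $n \geq 4$; reducing $N_0$ to $4$ is a separate issue the paper treats only computationally.
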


\noindent At present, this conjecture is still wide open, but there is a somewhat weaker result.
	\begin{theorem}[Theorem 1.1 of \cite{HKSS_2019_1}]\label{Weak Rigidity Theorem}
	There exist integer coefficients $a_i, b_i, c_i, d_i$ such that for any $C > 0$, there exists a positive integer $N$ such that for all integers $n \geq N$,
		\begin{align*}
    U(1,n) \cap [1,Cn] = \left(\bigcup_{i = 0}^\infty [a_i n + b_i, c_i n + d_i] \right) \cap [1,Cn].
    \end{align*}
	\end{theorem}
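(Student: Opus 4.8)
The plan is to deduce the theorem from the structure theory of the Ulam sequence of $\ZZ[X]$ developed above, via an evaluation argument at a large value of $X$. Concretely, I would fix an infinite hyperinteger $H \in \HN$ and regard the indeterminate $X$ as specialized to $H$: since $H$ exceeds every standard integer, the ordering of $\ZZ[X]$ in which $X$ is larger than all integers is exactly the one induced by the ring embedding $\ZZ[X] \hookrightarrow \HZ$, $X \mapsto H$, and a linear polynomial $aX + b$ becomes the hyperinteger $aH + b$, whose order is governed by comparing the pair $(a,b)$ lexicographically. A block $[\alpha(X), \beta(X)]$ of the $\ZZ[X]$-Ulam sequence, with $\alpha,\beta$ linear, becomes a genuine internal interval $[\alpha(H), \beta(H)]$ of infinite length — this is the point that makes the polynomial ring, rather than a literal sequence of polynomials, the correct object.

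First I would show that for any standard $C > 0$ the internal sequence $U(1,H)$, restricted to $[1, CH]$, is a union of finitely many internal intervals $[a_i H + b_i, c_i H + d_i]$ with standard integer coefficients $a_i,b_i,c_i,d_i$, and that these coefficients are precisely those read off from the blocks of the $\ZZ[X]$-Ulam sequence below $CX$. This is proved by an internal induction that walks through the blocks one at a time: knowing that the first several blocks of $U(1,H)$ are $[a_jH+b_j, c_jH+d_j]$, one determines the next block by examining the finitely many internal Minkowski sums $[a_jH+b_j,c_jH+d_j]+[a_kH+b_k,c_kH+d_k]$ of earlier blocks together with their off-diagonal self-sums $[2a_jH+2b_j+1,\,2c_jH+2d_j-1]$; a hyperinteger in this region lies in $U(1,H)$ exactly when it has a unique such representation, and the first place this fails — hence the endpoint of the current block — is the solution of finitely many equalities and inequalities among linear expressions in $H$. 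Because $H$ is infinite, each such comparison is decided purely by the leading and constant coefficients, so the block endpoints are standard integer expressions $a_iH+b_i$, $c_iH+d_i$ agreeing with the output of the formal $\ZZ[X]$-recursion; in particular the answer depends only on $C$, not on $H$, and the coefficient lists for different $C$ are mutually consistent and assemble into a single sequence $(a_i,b_i,c_i,d_i)_{i\ge 0}$.

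Next I would transfer back to standard integers. After truncating the union to the finitely many blocks that can meet $[1,Cn]$, the assertion
	\begin{align*}
	U(1,n) \cap [1, Cn] = \left(\bigcup_{i = 0}^{\infty} [a_i n + b_i, c_i n + d_i]\right) \cap [1, Cn]
	\end{align*}
is an internal property of $n$; by the previous step it holds for $n = H$, and since $H$ was an arbitrary infinite hyperinteger it holds for every infinite $n \in \HN$. Hence the internal set of $n$ for which it holds contains all infinite hyperintegers, so by overspill it contains $[N,\infty)\cap\HN$ for some standard $N$, and by transfer the equality holds for all standard $n \ge N$. Using the single consistent coefficient sequence for every $C$ gives the theorem.

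The hard part will be the block-by-block induction of the second paragraph, where two issues need care. One is the bookkeeping at block transitions: a new block begins exactly where the Minkowski sums of two earlier blocks first overlap, or where an off-diagonal self-sum first collides with another sum, and one must verify that at every such transition the leading coefficients of the competing linear expressions are genuinely different — or, when they coincide, that the constant terms break the tie uniformly — so that no block endpoint depends on delicate arithmetic of $H$; the finitely many ``bad'' finite $n$ at which two inequivalent linear expressions happen to agree are then absorbed into $N$. The other, and I expect the more substantial, point is termination: one must show that standard-finitely many blocks already exhaust $[1,CH]$, equivalently that the left-endpoint leading coefficients $a_i$ tend to infinity, equivalently that $U(1,n)$ decomposes into a number of maximal runs that is bounded uniformly in $n$ below any fixed multiple of $n$. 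I would establish this from the same recursion by tracking how far the Minkowski sums of the current blocks reach and arguing that each time the sequence must ``jump a gap'' the leading coefficient of the next block strictly increases, so that the leading coefficient is bounded below by a strictly increasing function of the block index.
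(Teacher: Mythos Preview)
Your approach is sound and essentially recovers the original model-theoretic argument from \cite{HKSS_2019_1} that the present paper explicitly sets out to replace with a constructive one. The paper proceeds differently: it first proves directly, by induction on the leading coefficient $k$, that there is a unique Ulam sequence $U(1,X)$ in the ordered ring $\ZZ[X]$ and that it is a $DS$-subset (this step absorbs exactly the termination issue you flag, showing $U(1,X)\cap[1,kX+l_k]$ is a finite union of intervals for each $k$); it then exhibits an explicit polynomial-time algorithm (Algorithm~\ref{Naive Algorithm}) computing the coefficients $a_i,b_i,c_i,d_i$, and proves Theorem~\ref{Main Theorem}---from which Theorem~\ref{Weak Rigidity Theorem} is immediate---by augmenting this algorithm to record, at every comparison $p(X)<q(X)$ it performs, the least $M$ such that the truth value is preserved under $X\mapsto m$ for all $m\ge M$, and returning the maximum $N$ of these finitely many thresholds. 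Both routes rest on the same combinatorial fact that the block structure is governed by finitely many linear inequalities that stabilize for large $n$; your overspill packaging is conceptually clean but yields no effective $N$, while the paper's route produces both the coefficients and a concrete $N$ by a polynomial-time procedure, which is its main contribution. One small wording correction: the block-by-block walk you describe is an external (standard) finite induction over a standard index set, not an internal induction---harmless once termination gives standard finiteness, but worth phrasing accurately.
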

	
\begin{table}
\renewcommand{\arraycolsep}{1.8pt}
\begin{align*}
\begin{array}{lllllllllll}
U(1,X) = \{1\} & \cup & [X, 2 X] & \cup & \{2 X+2\} & \cup & \{4 X\} \\
& \cup & [4 X+2, 5 X-1] & \cup & \{5 X+1\} & \cup & [7 X+3, 8 X+1] \\
& \cup & \{10 X+2\} & \cup & \{11 X+2\} & \cup & [13 X+4, 14 X+1] \\
& \cup & \{16 X+2\} & \cup & \{17 X+2\} & \cup & \{19 X+3\} \\
& \cup & \{20 X+2\} & \cup & \{22 X+3\} & \cup & \{23 X+4\} \\
& \cup & [25 X+4, 25 X+5] & \cup & \{26 X+3\} & \cup & \{28 X+4\} \\
& \cup & [31 X+5, 32 X+3] & \cup & \{34 X+5\} & \cup & \{38 X+6\} \\
& \cup & \{40 X+5\} & \cup & [40 X+8, 41 X+4] & \cup & [43 X+7, 44 X+4] \\
& \cup & \{44 X+6\} & \cup & \{46 X+7\} & \cup & [49 X+8, 50 X+6] \\
& \cup & [52 X+8, 53 X+7] & \cup & [55 X+9, 56 X+6] \ldots
\end{array}
\end{align*}

\caption{The first $30$ intervals of $U(1,X)$---equivalently, replacing $X$ with $n$, the first $56n + 6$ terms of $U(1,n)$ if $n \geq 4$.}
\label{tab:First 30 Intervals}
\end{table}

\noindent The original proof of this theorem was nonconstructive and in fact model theoretic in nature, although it was shown in \cite{HKSS_2019_1} that assuming Theorem \ref{Weak Rigidity Theorem}, one can prove that there exists an algorithm that will both find the coefficients $a_i,b_i,c_i,d_i$ and the minimal integer $N_0$ that will satisfy the conditions of Theorem \ref{Weak Rigidity Theorem}, given $C > 0$. Our present goal is to give an alternate, constructive proof of Theorem \ref{Weak Rigidity Theorem} by considering a variant of the Ulam sequence that sits inside the polynomial ring $\ZZ[X]$. This ring can be given the structure of an ordered ring by giving it the lexicographical ordering---that is, $p(X) > q(X)$ if and only if the leading term of $p(X) - q(X)$ has a positive coefficient. In Section \ref{section: new Ulam sequence}, we define a set
	\begin{align*}
	U(1,X) = \bigcup_{i = 0}^\infty [a_i X + b_i, c_i X + d_i] \subset \ZZ[X]
	\end{align*}
	
\noindent which should be viewed as an analog of an Ulam sequence inside this ordered ring.  Here $[x,y]$ has the usual meaning that it is the set of all elements $z \in \ZZ[X]$ such that $x \leq z \leq y$. We call the sequences $\{a_i\}_{i \in \NN}$, $\{b_i\}_{i \in \NN}$, $\{c_i\}_{i \in \NN}$, and $\{d_i\}_{i \in \NN}$ the \emph{coefficients} of $U(1,X)$---as we shall see, they are uniquely determined and, better yet, there is an algorithm to compute them.

\begin{theorem}\label{Main Theorem}
There exists a polynomial-time algorithm $\mathcal{A}_\text{Ulam}$ such that on an input of $k \in \NN$, it returns the first $k$ coefficients of $U(1,X)$---that is, $\{a_i\}_{i = 0}^k$, $\{b_i\}_{i = 0}^k$, $\{c_i\}_{i = 0}^k$, and $\{d_i\}_{i = 0}^k$---and an integer $N$ such that for all $n \geq N$,
	\begin{align*}
	U(1,n) \cap [1, c_k n + d_k] = \bigcup_{i = 0}^\infty [a_i n + b_i, c_i n + d_i] \cap [1, c_k n + d_k].
	\end{align*}
\end{theorem}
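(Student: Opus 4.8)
The plan is to run the Ulam recursion verbatim inside the discrete ordered ring $(\ZZ[X],\leq)$ with the lexicographic order, and then to transfer the result to $\ZZ$ by specializing $X$ to all sufficiently large $n$. First I would record that $(\ZZ[X],\leq)$ is a \emph{discrete} ordered ring: its positive cone is closed under addition, and every element $p$ has an immediate successor $p+1$. Consequently the recursion ``start with $1,X$; at each step adjoin the $\leq$-least element that is a sum of two distinct already-chosen elements in exactly one way'' is well defined and deterministic, which already forces uniqueness of the resulting data. I would then prove by induction on the stage that the set constructed so far is a disjoint union of \emph{blocks} $[a_iX+b_i,c_iX+d_i]$ with $a_i\le c_i$, consecutive elements of a block differing by $1$ and consecutive blocks separated by a $\leq$-gap of size $\geq 2$. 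The engine of the induction is that the pair-sums of a union of blocks again form a finite union of blocks, since $(aX+b)+(cX+d)=(a+c)X+(b+d)$; merging these with multiplicity and locating the least element of multiplicity one not yet present reduces to finitely many comparisons of linear forms in $X$, each decided by its leading data. Because the element $1$ is always present, from the top $c_iX+d_i$ of block $i$ the pair-sum $1+(c_iX+d_i)$ shows that the next block begins with $X$-coefficient at most $c_i+2$; iterating, the coefficients $a_i,b_i,c_i,d_i$ grow at most linearly in $i$, consistent with Table \ref{tab:First 30 Intervals}.

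This linear growth is what makes the algorithm efficient. To pass from the first $i$ blocks to block $i+1$ one forms the $O(i^2)$ pairwise block-sums, sweeps their $O(i^2)$ endpoints to build the union-with-multiplicities (each endpoint an integer pair of $O(\log i)$ bits), and selects the least block of multiplicity one disjoint from the blocks already constructed; this costs $\mathrm{poly}(i)$ bit-operations, so the first $k$ blocks are produced in time $\mathrm{poly}(k)$.

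The substance of the theorem is the transfer principle, and I expect it to be the main obstacle. For large $n$ the blocks of index $>k$ specialize to intervals lying entirely above $c_kn+d_k$ (since $a_{k+1}X+b_{k+1}>c_kX+d_k$ in $\ZZ[X]$), so the asserted identity says exactly that $U(1,n)\cap[1,c_kn+d_k]$ equals $\bigcup_{i=0}^{k}[a_in+b_i,c_in+d_i]$ once $n\geq N$. I would prove this by induction on $k$. The crucial elementary point is that a sum of two distinct positive integers lying in $[1,c_kn+d_k]$ can only use summands in $[1,c_kn+d_k]$, so the Ulam dynamics restricted to this window depend solely on the part of $U(1,n)$ already inside it. Granting the inductive hypothesis through $[1,c_{k-1}n+d_{k-1}]$, every statement the finite-$n$ recursion must adjudicate in order to produce the next block — which forms $a_in+b_i$ precede which, which pairwise sums coincide, how many distinct representations a given integer admits, whether the predicted gap after $c_{k-1}X+d_{k-1}$ is truly empty — is a Boolean combination of inequalities between finitely many linear forms $an+b$ with bounded coefficients, each changing sign at most once in $n$; so taking $N$ past all these finitely many sign-change points (a maximum of explicit rational thresholds, hence an integer of $\mathrm{poly}(k)$ bits) forces the finite-$n$ comparisons to agree with the corresponding $\leq$-comparisons in $\ZZ[X]$, and therefore forces the finite-$n$ recursion to reproduce the specialized polynomial blocks exactly. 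The points requiring care are: (i) that a block with $a_i=c_i$ specializes to the finite set $\{a_in+b_i,\dots,a_in+d_i\}$ and contributes precisely those integers; (ii) that multiplicities survive specialization — no two distinct polynomial representations collapse to one integer representation, and no new integer coincidences are created — which again holds once $n$ exceeds finitely many thresholds; and (iii) the base case, verified by direct computation. Combining the induction with the algorithm of the previous paragraphs, and outputting $N$ as the maximum of the thresholds through stage $k$, proves Theorem \ref{Main Theorem}, and specializing further recovers a constructive proof of Theorem \ref{Weak Rigidity Theorem}.
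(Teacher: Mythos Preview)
Your proposal is correct and takes essentially the same approach as the paper: compute $U(1,X)$ block by block via interval arithmetic in the lexicographically ordered ring, and transfer to $U(1,n)$ by recording, for every comparison of linear forms the algorithm makes, the threshold past which its integer truth value agrees with its lexicographic truth value, then outputting $N$ as the maximum of these thresholds. The paper phrases the transfer as ``run Algorithm~\ref{Naive Algorithm} and log the stabilization point of each comparison'' while you phrase it as an induction on the block index $k$, but the content is identical; your one-line justification that the $X$-coefficients grow at most linearly (needed for the polynomial bit-complexity claim) is a bit thin, though the paper is no more careful on this point.
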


\noindent Theorem \ref{Weak Rigidity Theorem} is an immediate corollary of Theorem \ref{Main Theorem}. We construct an explicit example of such an algorithm in Section \ref{section: algorithms} and show that there should be many other such algorithms, including ones that are likely much more efficient. Studying the output of such algorithms raises the possibility of proving that
	\begin{align*}
	U(1,n) = \bigcup_{i = 0}^\infty [a_i n + b_i, c_i n + d_i]
	\end{align*}
	
\noindent for all $n \geq N$ for some natural number $N$. To the best of the author's knowledge, this is the first proposed method of attacking Conjecture \ref{conj:rigidity}. However, this is not the only benefit of introducing the set $U(1,X)$---it also makes it convenient to state a conjecture for which the author found ample numerical evidence.

\begin{conjecture}\label{conj:stat}
Let $a_i,b_i,c_i,d_i$ be the coefficients of $U(1,X)$. There exist real numbers $\lambda' \approx 0.417031, \sigma_1 \approx 1.86, \sigma_2 \approx -1.3$ such that for any $\epsilon > 0$, if $i$ is sufficiently large, then
	\begin{align*}
	a_iX + b_i, c_iX + d_i \mod 3X + \lambda' \in &\left(X + \frac{\lambda'}{3} - \epsilon, X + \sigma_1 + \epsilon\right) \\ \cup &\left(2X + \sigma_2 - \epsilon, 2X + \frac{2\lambda'}{3} + \epsilon\right).
	\end{align*}
\end{conjecture}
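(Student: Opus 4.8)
The plan is to reduce Conjecture~\ref{conj:stat} to an equidistribution statement for a low-complexity dynamical system that generates the coefficients of $U(1,X)$, and then to analyze that system. \emph{Step 1: extract a renormalization dynamical system.} Because $U(1,X)$ is a locally finite union of intervals and addition in $\ZZ[X]$ is coefficientwise — so that $[\alpha,\beta]+[\gamma,\delta]=[\alpha+\gamma,\beta+\delta]$ — whether a candidate element lies in $U(1,X)$ is decided by counting how its slope and constant term are hit by sumsets of earlier intervals. These sumsets involve intervals at all earlier indices, but the point of Step~1 is to show that after rescaling by the leading coefficient $a_i$ the combinatorics stabilizes: the proof that the algorithm $\mathcal{A}_\text{Ulam}$ of Theorem~\ref{Main Theorem} runs in polynomial time should be refined into a genuine recursion $s_{i+1}=T(s_i)$ on a compact space $\Sigma$ of rescaled interval configurations, with coordinates the bounded increments $a_{i+1}-a_i$, $c_{i+1}-a_{i+1}$, \dots, together with constant-term offsets such as $b_i-\lfloor a_i/3\rfloor\lambda'$. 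The structural hope is that the interval sequence of $U(1,X)$ is, eventually, the fixed point of a primitive Pisot substitution, with $3X+\lambda'$ encoding its inflation data; the high precision quoted for $\lambda'$ versus $\sigma_1,\sigma_2$ already suggests that $\lambda'$ is such an inflation constant computed from a recursion, and that the system is genuinely aperiodic, so that a purely combinatorial ``it stabilizes'' argument — as for $U(2,2n+1)$ — cannot apply.

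\emph{Step 2: unique ergodicity and the constants.} Granting that $T\colon\Sigma\to\Sigma$ is minimal and uniquely ergodic with invariant measure $\mu$ — the ideal case being that $T$ is conjugate to a classically understood system, such as an interval exchange, an adic system, or a translation on the Rauzy torus of a Pisot substitution — the mean slope is a ratio of Birkhoff averages, $\tfrac{\lambda'}{3}=E_\mu[\Delta b]/E_\mu[\Delta a]$, so that $\lambda'$ exists and is computable, and boundedness of $b_i-\tfrac{\lambda'}{3}a_i$ is a bounded-remainder statement: by a Gottschalk--Hedlund-type argument it says that $\Delta b-\tfrac{\lambda'}{3}\Delta a$ is a continuous coboundary up to the bounded error from the floor function. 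The residue $a_i\bmod 3$ is locally constant on $\Sigma$, so $a_i\not\equiv 0\pmod 3$ for large $i$ is the assertion that $\{a\equiv 0\}$ is not in the support of $\mu$ (the finitely many exceptional small indices lying in a transient part of the system). Conjecture~\ref{conj:stat} then becomes the claim that the pushforward of $\mu$ under $s\mapsto\bigl(a\bmod 3,\ b-\lfloor a/3\rfloor\lambda'\bigr)$ is supported in $\{1\}\times[\tfrac{\lambda'}{3},\sigma_1]\cup\{2\}\times[\sigma_2,\tfrac{2\lambda'}{3}]$, with $\sigma_1,\sigma_2$ the extreme values of the offset over the orbit closure; once $T$ and $\mu$ are pinned down, determining these constants and verifying the inclusion is explicit, if laborious.

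\emph{The main obstacle.} Everything rests on Step~2: proving that the renormalization map $T$ is minimal and uniquely ergodic — equivalently, establishing its self-similar (Pisot) structure — and that the offset coordinate stays bounded. This is exactly the type of statement that is out of reach for the classical Ulam sequence and that keeps Conjecture~\ref{conj:gibbs} open; the wager in passing to $\ZZ[X]$ is that this model is rigid enough that $T$ is an explicit, classically studied map rather than a black box. Step~1 is itself nontrivial, since checking that the relevant interval data really stabilizes after renormalization requires a careful case analysis of the Ulam condition across interval boundaries. A sensible first milestone is therefore the weaker conclusion that the endpoints of $U(1,X)$, reduced modulo $3X+\lambda'$, lie in the middle third $\bigl(X+\tfrac{\lambda'}{3},\,2X+\tfrac{2\lambda'}{3}\bigr)$ — equivalently, that $a_i\not\equiv 0\pmod 3$ eventually and $b_i-\tfrac{\lambda'}{3}a_i$ is bounded — which is the polynomial-ring analogue of Gibbs' conjecture itself, deferring the split into two subintervals and the precise values of $\sigma_1$ and $\sigma_2$ to a finer study of $\mu$.
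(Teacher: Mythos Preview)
The statement you are attempting to prove is Conjecture~\ref{conj:stat}, and the paper does \emph{not} contain a proof of it. It is presented as an open conjecture, supported only by numerical evidence gathered in Section~\ref{section: numerical}: the author computes roughly $2\times 10^5$ intervals of $U(1,X)$, observes empirically that the residues $a_iX+b_i$ and $c_iX+d_i$ modulo $3X+\lambda'$ cluster in the two stated subintervals, and formulates the conjecture on that basis. No argument beyond these computations is offered. So there is no ``paper's own proof'' to compare your proposal against.

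Your proposal is honest about being a research program rather than a proof: you explicitly flag that Step~2 (unique ergodicity of the putative renormalization map, Pisot structure, bounded-remainder property) is ``exactly the type of statement that is out of reach for the classical Ulam sequence,'' and that even Step~1 (extracting a finite-dimensional recursion from the interval combinatorics) is nontrivial. Both assessments are correct. In particular, nothing in the paper establishes that the coefficients of $U(1,X)$ are generated by a recursion on a compact state space, let alone a substitution system; the only structural result proved is that the coefficients are computable by the polynomial-time Algorithm~\ref{Naive Algorithm}, which is far weaker. So while your outline is a reasonable heuristic for why such a conjecture might be true, and the analogy with Pisot substitutions is suggestive, it does not constitute a proof, and you should not present it as one. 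If you intend to pursue this, the concrete first milestone you name --- proving $a_i\not\equiv 0\pmod 3$ eventually and that $b_i-\tfrac{\lambda'}{3}a_i$ is bounded --- is already beyond anything established in the paper and would be a substantial advance.
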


\noindent The precise definition of taking a modulus in $\ZZ[X]$ shall be given in Section \ref{section: numerical}. This conjecture should be seen as an analog of Conjecture \ref{conj:gibbslike} for the ordered ring $\ZZ[X]$. In Section \ref{section: conjectures}, we also demonstrate that Conjecture \ref{conj:stat} has a number of remarkable consequences---for example, it implies that $b_i$ grows linearly with respect to $a_i$, and similarly $d_i$ grows linearly with respect to $c_i$; furthermore, we prove that Conjectures \ref{conj:stat} and \ref{conj:rigidity} together imply Conjecture \ref{conj:gibbslike} for $n \geq 4$.

\begin{figure}
\begin{center}
\includegraphics[width=0.85\textwidth]{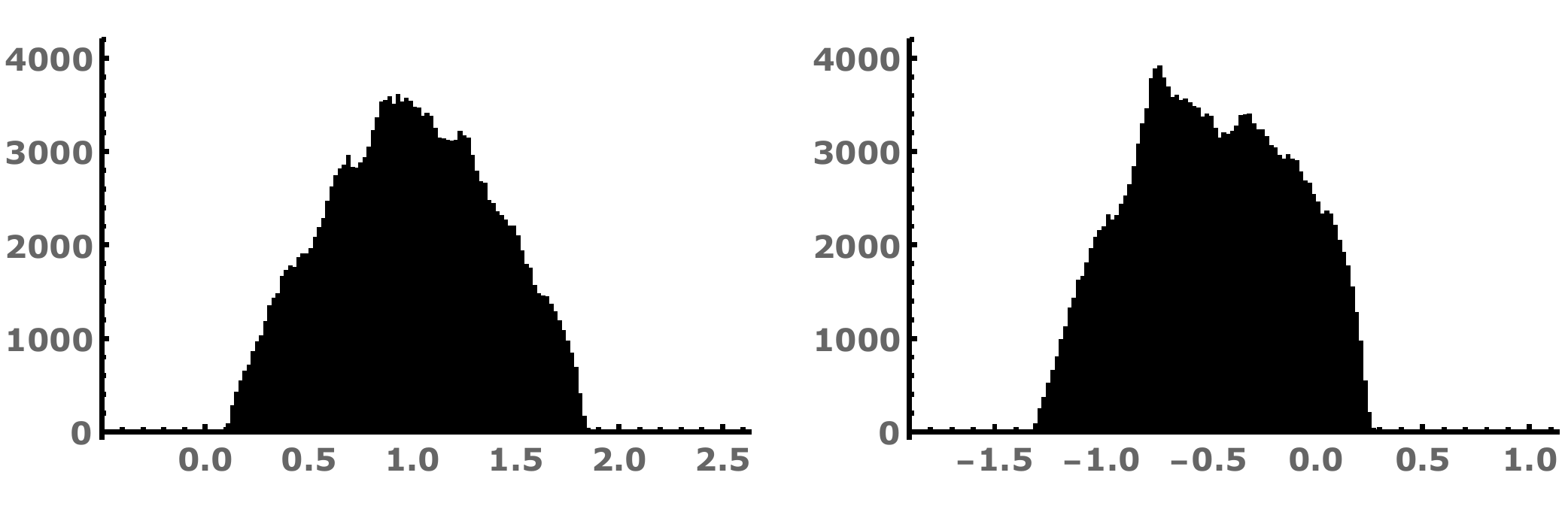}
\end{center}

\caption{Histogram of $U(1,X) \mod \lambda(X)$ for the first 200000 intervals. The plot on the left represents elements in $(X + \frac{\lambda'}{3} - \epsilon, X + \sigma_1 + \epsilon)$---the plot on the right represents elements in $(2X + \sigma_2 - \epsilon, 2X + \frac{2\lambda'}{3} + \epsilon)$.}
\label{Ulam Statistics}
\end{figure}

\subsection*{Acknowledgements:}

The author is exceptionally grateful to Oleg Sheydvasser, for lending his expertise in implementing efficient versions of the algorithms discussed in this paper, and for lending his processor time to run them. The author would also like to thank Stefan Steinerberger for providing helpful discussion and feedback, and Camp Minnetonka for the atmosphere in which this paper was finished.
	
\section{An Ulam-Like Sequence:}\label{section: new Ulam sequence}

We begin by defining what we mean by Ulam sequences inside of the ordered ring $\ZZ[X]$.

\begin{definition}
Write $G_d$ for the additive subgroup of $\ZZ[X]$ generated by $1,X,\ldots,X^d$; this is automatically an ordered group. Given $0 < a < b$ in $\ZZ[X]$, an \emph{Ulam sequence} starting with $a,b$ is a set $U \subset G_{\text{deg}(b)}$ such that
    \begin{enumerate}
        \item $U \cap (-\infty,b] = \{a,b\}$,
        \item for all $p < q \in G_d$, $U \cap [p,q]$ has both a minimum and a maximum, and
        \item for every $p \in (b,\infty)$, $p \in U$ if and only if it is the smallest element in the set
            \begin{align*}
                \left\{q \in G_{\text{deg}(b)}\middle|q > U \cap (-\infty,p) \text{ and } \exists! x \neq y \in U, \ q = x + y\right\}.
            \end{align*}
    \end{enumerate}
\end{definition}

\begin{table}
    \centering
    \begin{align*}
        X,&X+1,2 X+1,3 X+1,3 X+2,4 X+1,4 X+3,5 X+1,5 X+4,6 X+1, \\
        &6 X+3,6 X+5,7 X+1,7 X+6,8 X+1,8 X+3,8 X+5,8 X+7,9 X+1, \\
        &9 X+8,10 X+1,10 X+3,10 X+5,10 X+7,10 X+9,11 X+1,11 X+10,\\
        &12 X+1,12 X+3,12 X+5,12 X+7,12 X+9,12 X+11,13 X+1,13 X+12, \\
        &14 X+1,14 X+3,14 X+5,14 X+7,14 X+9,14 X+11,14 X+13,15 X+1,\\
        &15 X+14,16 X+1,16 X+3,16 X+5,16 X+7,16 X+9,16 X+11,16 X+13,\\
        &16 X+15,17 X+1,17 X+16,18 X+1,18 X+3,18 X+5,18 X+7,18 X+9,\\
        &18 X+11,18 X+13,18 X+15,18 X+17,19 X+1,19 X+18,20 X+1,20 X+3,\\
        &20 X+5,20 X+7,20 X+9,20 X+11,20 X+13,20 X+15,20 X+17,20 X+19,\\
        &21 X+1,21 X+20,22 X+1,22 X+3,22 X+5,22 X+7,22 X+9,22 X+11,\\
        &22 X+13,22 X+15,22 X+17,22 X+19,22 X+21,23 X+1,23 X+22,\\
        &24 X+1,24 X+3,24 X+5,24 X+7,24 X+9,24 X+11,24 X+13,\\
        &24 X+15,24 X+17,24 X+19 \ldots
    \end{align*}
    \caption{The first 100 terms of the unique Ulam sequence starting with $X,X+1$.}
    \label{tab:weird ulam sequence}
\end{table}

\begin{remark}
If $a,b \in \ZZ$, then this just reduces to the usual definition of an Ulam sequence.
\end{remark}

\begin{remark}
The idea of extending Ulam sequences to other ordered abelian groups than the integers is not new---Kravitz and Steinerberger previously showed that one can define the notion of an Ulam subset of $\RR^n$ \cite{kravitz_steinerberger_2017}.
\end{remark}

Why do we say that it is \emph{an} Ulam sequence, rather than \emph{the} Ulam sequence? This is because, in general, it is not unique! For example, it follows from earlier model theoretic results \cite{HKSS_2019_1} that there exist Ulam sequences $U,U'$ starting with $2,X$ such that
    \begin{align*}
        U \cap [2,2X + 1] &= \{2\} \cup \{X + 2k|k \in \NN\} \cup \{2X - 2k|k \in \NN\} \\
        U' \cap [2,2X + 1] &= \{2\} \cup \{X + 2k|k \in \NN\} \cup \{2X - 2k + 1|k \in \NN\}.
    \end{align*}
    
\noindent Nevertheless, there are some specific cases where such an Ulam sequence is unique---in particular, we shall show that there is a unique Ulam sequence starting with $1,X$, which we shall denote by $U(1,X)$. This set is very special in that it is a countable union of disjoint intervals that appear in order.

\begin{definition}
We say that $S \subset G_1$ is a \emph{$DS$-subset}, if there exists some integer sequences $a_i,b_i,c_i,d_i$ such that
	\begin{align*}
	S = \bigcup_{i \in I} \left[a_i X + b_i, c_i X + d_i\right]
	\end{align*}
	
\noindent where $I$ is a subset of $\ZZ$, $c_i X + d_i + 1 < a_{i + 1}  + b_{i + 1}$ for all $i \in I$, and for all $i < i' \in I$, if $u \in [a_i X + b_i, c_i X + d_i]$ and $v \in [a_{i'}X + b_{i'}, c_{i'} X + d_{i'}]$, then $u < v$. We call the sequences $a_i,b_i,c_i,d_i$ the \emph{coefficients} of $S$.
\end{definition}

It is easy to determine the starting intervals of this set.

\begin{lemma}
Let $U$ be an Ulam sequence starting with $1,X$. Then $U \cap [1,2X + 1] = \{1\} \cup [X,2X]$.
\end{lemma}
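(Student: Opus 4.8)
The plan is to exploit the order structure of $G_1$. Writing elements of $G_1$ as $\alpha X + \beta$, one checks that $[X,2X]\cap G_1$ is the disjoint union of a \emph{lower layer} $L=\{X+k:k\geq 0\}$ (the elements with $\alpha=1$) and an \emph{upper layer} $T=\{2X-m:m\geq 0\}$ (the elements with $\alpha=2$), with every element of $L$ strictly below every element of $T$, and that $2X+1$ is the only element of $(X,2X+1]\cap G_1$ outside $[X,2X]$. Since condition (1) already excludes from $U$ every positive integer other than $1$ and every $X+\beta$ with $\beta<0$, the lemma is equivalent to the three statements $L\subseteq U$, $T\subseteq U$, and $2X+1\notin U$.

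I would handle $L\subseteq U$ and $2X+1\notin U$ first, as they are routine applications of condition (3). For the lower layer I induct on $k$: assuming $X,X+1,\dots,X+k\in U$, no element of $G_1$ lies strictly between consecutive $X+j$, so condition (1) gives $U\cap(-\infty,X+k+1)=\{1,X,X+1,\dots,X+k\}$; then $X+k+1$ is the minimal element of $G_1$ exceeding this set, and its only representation as a sum of two distinct elements of $U$ is $1+(X+k)$, since any such representation with both summands $\geq X$ has value at least $X+(X+1)=2X+1>X+k+1$ and no integer strictly between $1$ and $X$ lies in $U$. Hence condition (3) puts $X+k+1$ into $U$. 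For $2X+1$: it has the two distinct representations $1+2X$ and $X+(X+1)$ as sums of two distinct elements of $U$ (all four terms now lying in $U$), so it fails the uniqueness clause of condition (3), whence $2X+1\notin U$.

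The upper layer is the crux, and the difficulty is precisely that $T$ has no least element, so the bottom-up induction of the previous paragraph cannot be continued through $T$. Here condition (2) is indispensable: $U\cap[X+1,2X]$ has a maximum, and since it contains all of $L\setminus\{X\}$, which has no maximum, that maximum must be an element $2X-m^\ast$ of $T$, so in particular $2X-m\notin U$ for $0\leq m<m^\ast$. I claim $m^\ast=0$. If $m^\ast\geq 1$, put $q=2X-m^\ast+1\leq 2X$; then $2X-m^\ast$ is the largest element of $U$ below $q$, so $q$ is the minimal element of $G_1$ exceeding $U\cap(-\infty,q)$, and its unique representation as a sum of two distinct elements of $U$ is $1+(2X-m^\ast)$ (a representation with both summands $\geq X$ would be at least $2X+1>q$); condition (3) then forces $q\in U$, contradicting $2X-(m^\ast-1)=q\notin U$. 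Therefore $2X\in U$. Finally, condition (3) says $2X$ has a unique representation as a sum of two distinct elements of $U$; the only candidate is $1+(2X-1)$ (a representation by two distinct elements both $\geq X$ exceeds $2X$), so $2X-1\in U$, and iterating this --- each $2X-m\in U$ forces its unique representation to be $1+(2X-m-1)$, hence $2X-m-1\in U$ --- gives $T\subseteq U$. The only nontrivial ingredient beyond this representation bookkeeping is the appeal to condition (2) to pin down the top of the upper layer.
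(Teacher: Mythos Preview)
Your proof is correct and follows essentially the same strategy as the paper's: induct upward through the lower layer $L=\{X+k:k\ge 0\}$, invoke condition (2) to land in the upper layer $T=\{2X-m:m\ge 0\}$, use the observation that any representation of an element $\le 2X$ with both summands $\ge X$ is at least $2X+1$ to force the summand $1$, and finish with the two representations of $2X+1$.

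Two small remarks. First, your treatment of the upper layer is actually more complete than the paper's. The paper argues that $T\cap U$ contains elements $2X-k$ with $k$ arbitrarily large and that $2X-k\in U$ forces $2X-k-1\in U$; taken literally this only yields downward closure of $\{k:2X-k\in U\}$ in $\NN$, not that $2X$ itself lies in $U$. Your extra step --- using condition (2) to pin down the maximum $2X-m^\ast$ of $U\cap[X+1,2X]$ and then showing $m^\ast=0$ by the one-step upward argument --- closes exactly this gap before you descend. Second, a purely expository point: when you dispose of $2X+1$ you write ``all four terms now lying in $U$,'' but at that moment you have not yet proved $2X\in U$; since your argument for $T\subseteq U$ nowhere uses $2X+1\notin U$, there is no circularity, and simply moving the $2X+1$ paragraph to the end fixes the ordering.
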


\begin{proof}
Obviously, $U$ contains both $1$ and $X$, hence it contains $X + 1$. By induction, it must also contain $X + k$ for all $k \in \NN$. It follows that it must contain elements $2X - k$ with $k$ arbitrarily large, since otherwise $U \cap [1,2X - k]$ would fail to have a maximum for some $k \in \NN$. But $2X-k$ must be the sum of two prior elements $u < v$, and if $u,v \geq X$, then it would have to be at least $2X + 1 = X + (X + 1)$. Therefore, $u = 1$ and $v = 2X - k - 1$, which is to say that $[X,2X] \subset U$. Finally, since $2X + 1$ can be written as the sum of distinct elements in $U$ in two different ways, $2X + 1 \notin U$.
\end{proof}

\begin{theorem}
There exists a unique Ulam sequence $U(1,X)$ starting with $1,X$. Furthermore, $U(1,X)$ is a $DS$-subset.
\end{theorem}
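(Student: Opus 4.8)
The plan is to prove existence, uniqueness, and the $DS$-structure simultaneously, by an induction that pins down any Ulam sequence $U$ starting with $1,X$ on an increasing, cofinal family of down-closed subsets of $G_1$, maintaining at every stage the picture that, below the current ``frontier'', $U$ is a finite union of disjoint closed intervals occurring in order. The preliminary step is to record the order-theoretic shape of $G_1$: identifying $aX+b$ with $(a,b)$ makes $G_1\cong\ZZ^2$ with the lexicographic order, so the $a$-th \emph{column} $\{aX+b:b\in\ZZ\}$ is order-isomorphic to $\ZZ$, every element of a higher column exceeds every element of a lower one, and a sum $x+y$ with $x,y$ in columns $a_1,a_2$ lies in column $a_1+a_2$ with second coordinate the sum of the second coordinates. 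Hence counting representations of $cX+d$ as a sum of two distinct elements of a set $S$ decomposes, column-pair by column-pair, into a convolution count of the traces of $S$ on the relevant finitely many pairs of columns; quantitative control of this count is the technical engine of the whole argument.

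Two features then drive everything. First, because $1\in U$, whenever $cX+b\in U$ the identity $1+(cX+b)=cX+(b+1)$ is a representation of $cX+(b+1)$, so a ``run'' $p,p+1,p+2,\dots$ inside a fixed column continues (each element being forced into $U$) until the first point at which the representation count reaches $2$. Second, condition (2) forbids $U\cap[p,q]$ from failing to have a maximum, hence a run that never terminates---an infinite ascending chain within a column---must be ``capped'' by content of $U$ in a strictly higher column. The core claim, proved by induction on $k$, is that there is a unique tuple $(a_i,b_i,c_i,d_i)_{i=0}^{k}$ meeting the ordering and gap conditions of a $DS$-subset such that every Ulam sequence $U$ starting with $1,X$ satisfies
\[
U\cap(-\infty,\,c_kX+d_k]\;=\;\bigcup_{i=0}^{k}\bigl[\,a_iX+b_i,\ c_iX+d_i\,\bigr].
\]

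The base case $k=1$ is the preceding lemma, writing $\{1\}$ as $[\,0\cdot X+1,\,0\cdot X+1\,]$. For the inductive step, set $f=c_kX+d_k$. Since any representation $x+y=q$ of a $q$ just above $f$ forces $x,y\le f$, condition (3) identifies the least element of $U$ above $f$ as
\[
p=\min\bigl\{\,q>f:\ q\text{ has exactly one representation }x+y,\ x<y,\ x,y\in U\cap(-\infty,f]\,\bigr\},
\]
with $U$ empty on $(f,p)$; this $p$ is the left endpoint $a_{k+1}X+b_{k+1}$ of the next interval. Starting from $p$, the first feature forces the run $p,p+1,\dots$ within the column of $p$; either it terminates, yielding a finite interval contained in a single column, or it is an infinite ascending chain, in which case the second feature forces a cap in the next occupied column. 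That cap cannot begin at a coordinate whose successor would be representable only as $1$ plus the cap itself (else the successor would be forced into $U$ too), so its top is pinned down as the largest coordinate $e$ such that the element at coordinate $e+1$ already has a representation coming from a pair of columns both strictly below the cap's column; and from that top the cap propagates downward---each of its elements being representable only as $1$ plus the one below it---so the cap is the descending ray of all elements of that column at or below its top. Together with the run, and with any fully-occupied intermediate columns, this assembles into the closed interval $[\,a_{k+1}X+b_{k+1},\ c_{k+1}X+d_{k+1}\,]$; one last finite check on the representation count produces the genuine gap before the next interval, closing the induction.

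Granting the core claim, uniqueness is immediate: any two Ulam sequences starting with $1,X$ agree with the common set $U(1,X):=\bigcup_i[\,a_iX+b_i,\ c_iX+d_i\,]$ on every $(-\infty,c_kX+d_k]$, and these down-sets are cofinal in $G_1$ because (as the induction shows) only finitely many new intervals arise in any fixed column, so the frontier columns increase without bound. Existence then reduces to checking conditions (1)--(3) for $U(1,X)$ itself: (1) is the base case, (3) holds at every stage by construction, and (2) holds precisely because every ascending chain in a column is capped by a descending chain in the next column; and $U(1,X)$ is a $DS$-subset by construction. The heart of the matter---and the expected main obstacle---is the interplay of conditions (2) and (3): showing that every non-terminating run is capped by a uniquely located descending ray one column up, and that all the relevant break points (the bottom $p$, the place a run first gains a second representation, the top of each cap, the start of each gap) exist, are finite, and are computable from the finitely much already-constructed data. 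This is exactly where the hypothesis $1\in U$ is essential---it is what forces runs and caps to terminate at sharply defined endpoints, and its absence for starting values such as $2,X$ is precisely why the Ulam sequence need not be unique there. Everything else is bookkeeping with the lexicographic order, the preceding lemma, and quantitative estimates on the representation-count convolution, which the subsequent polynomial-time theorem will make effective.
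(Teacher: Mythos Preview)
Your proposal is correct and follows essentially the same strategy as the paper's proof: build $U(1,X)$ inductively on a cofinal family of initial segments of $G_1$, using $1\in U$ to drive ascending runs within a column and condition~(2) to force descending caps in the next column, with all break points determined by the finitely many intervals already constructed. The only structural difference is that the paper inducts on the column index $k$ (with frontier $[1,kX+l_k]$) rather than on the interval index as you do; this makes the long-interval case marginally cleaner, since the run in column $k-1$ and the cap in column $k$ fall into consecutive inductive steps instead of being assembled into a single interval at once, but the two inductions are interchangeable.

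One small slip to fix: your location of the cap's top should read ``the \emph{smallest} $e$ such that the element at height $e+1$ admits a representation from strictly lower columns'' (equivalently $e=m_0-1$, where $m_0$ is the least height in the cap's column with such a representation), not the largest---as written there is no largest such $e$, since column~$1$ plus the infinite run produces lower-column representations at all sufficiently large heights. With that correction your downward propagation is valid: for every $m\le e$ there is no lower-column representation, so the unique representation of $(c+1)X+m$ is forced to be $1+((c+1)X+(m-1))$, placing $(c+1)X+(m-1)$ in $U$ as well; combined with the unbounded-below consequence of condition~(2), this pins the cap down as exactly $(-\infty,e]$ in that column.
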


\begin{proof}
We shall prove by induction that for all $k \in \NN$, there exists $l_k \in \ZZ$ and a unique set $U \subset [1,kX + l_k]$ such that
    \begin{enumerate}
        \item $U \cap [1,X] = \{1,X\}$,
        \item $U$ is a finite union of intervals,
        \item for every $u \in [1,kX + l_k]$, $u \in U$ if and only if there exists a unique pair $p < q \in U$ such that $u = p + q$.
    \end{enumerate}
    
\noindent We already know that this is true for $k = 0,1$. So, assume that it is true for $k - 1$---we shall prove it for $k$. Start with $U \subset [1,(k - 1)X + l_{k - 1}]$ which has the desired properties. By induction, there is a unique extension $U' \subset [1,(k - 1)X + l]$ for all $l \in \NN$ such that
    \begin{enumerate}
        \item $U' \cap [1,X] = \{1,X\}$ and
        \item for every $u \in [1,(k - 1)X + l]$, $u \in U'$ if and only if there exists a unique pair $p < q \in U'$ such that $u = p + q$.
    \end{enumerate}
    
\noindent Moreover, our claim is that if $l$ is sufficiently large, then either $U' \cap [(k - 1)X + l,(k - 1)X + l'] = [(k - 1)X + l,(k - 1)X + l']$ for all larger $l'$, or $U' \cap [(k - 1)X + l,(k - 1)X + l'] = \emptyset$. This is because if $u \in U'$, then $u + 1 \in U'$ unless there exist $X \leq p < q$ such that $u + 1 = p + q$. But it must be that $p < q < (k - 2)X + l$ for some $l \in \ZZ$ since otherwise $p + q$ would be too big. Therefore, $p,q$ both belong to one of the intervals of $U$---there are only finitely many of these, hence
    \begin{align*}
        [1,kX] \backslash \left\{p + q\middle|1 < p < q \in U\right\}
    \end{align*}
    
\noindent is a finite union of intervals. Therefore, either if $l$ is sufficiently large the intersection of this set with $[(k - 1)X + l, kX - l]$ is either all of $[(k - 1)X + l, kX - l]$ or empty. In the first case, $U' \cap [(k - 1)X + l,(k - 1)X + l'] = \emptyset$; in the second case, either $U' \cap [(k - 1)X + l,(k - 1)X + l'] = \emptyset$ or $U' \cap [(k - 1)X + l,(k - 1)X + l'] = [(k - 1)X + l,(k - 1)X + l']$, depending on whether $(k - 1)X + l \in U'$.

If there exists $l \in \NN$ such that $U' \cap [(k - 1)X + l,(k - 1)X + l'] = [(k - 1)X + l,(k - 1)X + l']$ for all larger $l'$, then there exists some $l'' \in \ZZ$ such that
    \begin{align*}
        \left([1,kX] \backslash \left\{p + q\middle|1 < p < q \in U\right\}\right) \cap [(k - 1)X + l, kX - l''] = \emptyset.
    \end{align*}
    
\noindent Therefore, if we define $l_k = l''$, $U'' = U \cup [(k - 1)X + l, kX + l'']$, then
    \begin{enumerate}
        \item $U'' \cap [1,X] = \{1,X\}$,
        \item $U''$ is a finite union of intervals,
        \item for every $u \in [1,kX + l_k]$, $u \in U''$ if and only if there exists a unique pair $p < q \in U''$ such that $u = p + q$.
    \end{enumerate}
    
\noindent Moreover, this is the only possible extension of $U'$ to $[1,kX + l_k]$, since the requirement that $U'' \cap [1,kX + l_k]$ have a maximum requires that there must be some element $kX + l''' \in U''$---but this forces $kX + l''' - 1, kX + l'''-2,\ldots \in U''$ and $kX + l''' + 1, kX + l''' + 2,\ldots kX + l_k \in U$.

Otherwise, $U' \cap [(k - 1)X + l,kX - l] = \emptyset$ for sufficiently large $l$. We can choose $l$ large enough such that for all $p \in [(k - 1)X + l, kX - l]$, either there exists more than one pair $X \leq u,v < kX - l$ such that $p = u + v$, or there are no such pairs. Then if we define $l_k = -l$, then $U'$ is a subset of $[1,kX + l_k$ such that
    \begin{enumerate}
        \item $U' \cap [1,X] = \{1,X\}$,
        \item $U'$ is a finite union of intervals,
        \item for every $u \in [1,kX + l_k]$, $u \in U'$ if and only if there exists a unique pair $p < q \in U'$ such that $u = p + q$.
    \end{enumerate}
    
\noindent Moreover, this is the only possible way to extend $U'$ to $[1,kX + l_k]$---if we were to add any element $kX + l$, it would require that $kX + l - 1, kX + l - 2, \ldots \in U'$, which would contradict the requirement that $U' \cap [(k - 1)X + l,kX + l_k]$ has a minimum.

Thus, we have shown that there is a unique $U \subset [1,kX + l_k]$ for all $k$. But this gives a unique $U \subset G_1$ which is just a union of all these sets---it is easy to see that this $U$ is the unique Ulam sequence starting with $1,X$, and that it is a $DS$-subset.
\end{proof}

\section{Algorithms:}\label{section: algorithms}

We have shown that $U(1,X)$ is uniquely determined and a highly structured set, but this is not all: its coefficients are computable, and the algorithm that does it can be used to give a very nice proof of Theorem \ref{Main Theorem}. Before we present this, we will need two intermediate subroutines.

\begin{alg}\label{Sum Algorithm}
On an input of two intervals $I_1 = [p_1,q_1]$, $I_2 = [p_2,q_2]$ such that $p_2 + 1 < q_1$, this algorithm returns a pair of $DS$-subsets $S_1, S_2$ such that $S_1$ consists of all elements that can be written as the sum of an element of $I_1$ and an element of $I_2$ in exactly one way, and $S_2$ consists of all elements that can be written as the sum of an element of $I_1$ and an element of $I_2$ in more than one way.
\begin{algorithm}[H]
\begin{algorithmic}[1]
\Procedure{Sum1}{$I_1$,$I_2$}
\State $start \gets p_1 + p_2$
\State $end \gets q_1 + q_2$
\If{$\# I_1 = 1$ or $\# I_2 = 1$}
	\State $S_1 \gets [start, end]$
	\State $S_2 \gets \{\}$
\Else
	\State $S_1 \gets \{start\} \cup \{end\}$
	\State $S_2 \gets [start + 1, end - 1]$
\EndIf
\State \textbf{return} $S_1, S_2$
\EndProcedure
\end{algorithmic}
\end{algorithm}
\end{alg}

\begin{proof}[Proof of Correctness]
Clearly, $I_1 + I_2 = [start, end]$, so it is solely a question of how this set is partitioned between $S_1$ and $S_2$. If $\# I_1 = 1$ or $\# I_2 = 1$, then it is easy to see that $S_2$ is empty and $S_1 = [start, end]$. Otherwise, $start, end \in S_1$, but for any $x \in [start + 1, end - 1]$ we can write $x = u + v = (u - 1) + (v + 1)$ for some $u,(u - 1) \in I_1$, $v, (v + 1) \in I_2$.
\end{proof}

The importance of this algorithm will come when we have to look at the sums of all of the elements in two distinct intervals in the Ulam sequence. However, we shall also have to consider sums of all elements in one given interval, and that is handled by our second subroutine.

\begin{alg}\label{Sum Algorithm Redux}
On an input of an interval $I = [p,q]$, this algorithm returns a pair of $DS$-subsets $S_1, S_2$ such that $S_1$ consists of all elements that can be written as the sum of two distinct elements of $I$ in exactly one way, and $S_2$ consists of all elements that can be written as the sum of two distinct elements of $I$ in more than one way.
\begin{algorithm}[H]
\begin{algorithmic}[1]
\Procedure{Sum2}{$I$}
\If{$\# I = 1$}
	\State $S_1 \gets \{\}$
	\State $S_2 \gets \{\}$
\ElsIf{$\# I = 2$}
	\State $S_1 \gets \{p + q\}$
	\State $S_2 \gets \{\}$
\ElsIf{$\# I = 3$}
	\State $S_1 \gets [2p + 1, 2p + 3]$
	\State $S_2 \gets \{\}$
\Else
	\State $S_1 \gets [2p + 1, 2p + 2] \cup [2q - 2, 2q - 1]$
	\State $S_2 \gets [2p + 3, 2q - 3]$
\EndIf
\State \textbf{return} $S_1, S_2$
\EndProcedure
\end{algorithmic}
\end{algorithm}
\end{alg}

\begin{proof}[Proof of Correctness]
It is clear that the subset of $\ZZ[X]$ representable by pairwise sums of distinct elements of $I$ is $[2p + 1, 2q - 1]$. It is easy to see that $2p + 1, 2p + 2, 2q - 2, 2q - 1 \in S_1$ if they are in this subset, while the remainder must be in $S_2$.
\end{proof}

Finally, we can describe the algorithm for computing the coefficients of $U(1,X)$.

\begin{alg}\label{Naive Algorithm}
On an input of a natural number $k$, this algorithm returns the first $k + 1$ coefficients $a_i,b_i,c_i,d_i$ of $U(1,X)$. This algorithm keeps track of the following three $DS$-subsets.
	\begin{enumerate}
		\item $ulam\_ds$ maintains the subset of $U(1,X)$ computed thus far.
		\item $one\_rep\_ds$ maintains a subset of $\ZZ[X]$ such that every element of $one\_rep\_ds$ is larger than every element of $ulam\_ds$, and each element of $one\_rep\_ds$ can be written as a sum of distinct elements in $ulam\_ds$ in exactly one way.
		\item $mult\_rep\_ds$ maintains a subset of $\ZZ[X]$ such that every element of $mult\_rep\_ds$ is larger than every element of $ulam\_ds$, and each element of $mult\_rep\_ds$ can be written as a sum of distinct elements in $ulam\_ds$ in more than one way.
	\end{enumerate}
\begin{algorithm}[H]
\begin{algorithmic}[1]
\Procedure{UlamCoefficients}{$k$}
\State $ulam\_ds \gets \{1\} \cup [X,2X]$
\State $one\_rep\_ds \gets \{\}$
\State $mult\_rep\_ds \gets \{2X + 1\}$
\State $largest\_computed \gets 2X$
\For{$1 < i \leq k$}
	\State $last \gets$ last interval in $ulam\_ds$
	\For{$interval \in ulam\_ds$}
		\If{$interval = last$}
			\State $(one\_rep\_guess\_ds, mult\_rep\_guess\_ds) \gets \text{Sum2}(last)$
		\Else
			\State $(one\_rep\_guess\_ds, mult\_rep\_guess\_ds) \gets \text{Sum1}(interval,last)$
		\EndIf
		\State $one\_rep\_guess\_ds \gets one\_rep\_guess\_ds \cap [largest\_computed + 2, \infty)$
		\State $mult\_rep\_guess\_ds \gets mult\_rep\_guess\_ds \cap [largest\_computed + 2, \infty)$
		\State $one\_rep\_ds \gets one\_rep\_ds \backslash \left(one\_rep\_ds \cap mult\_rep\_guess\_ds\right)$
		\State $one\_rep\_guess\_ds \gets one\_rep\_guess\_ds \backslash \left(one\_rep\_guess\_ds \cap mult\_rep\_ds\right)$
		\State $temp\_ds \gets$ the symmetric difference of $one\_rep\_guess\_ds$ and $one\_rep\_ds$
		\State $mult\_rep\_additional\_ds \gets \left(one\_rep\_ds \cup one\_rep\_ds\right) - temp\_ds$
		\State $one\_rep\_ds \gets temp\_ds$
		\State $mult\_rep\_ds \gets mult\_rep\_ds \cup mult\_rep\_guess\_ds \cup mult\_rep\_additional\_ds$
	\EndFor
	
	\State $[p,q] \gets$ smallest interval in $one\_rep\_ds$
	\If{$p = q$}
		\State $bound \gets p + X$
		\State $one\_rep\_bound \gets \min\left(bound, \min_{x > p}\left(x \in one\_rep\_ds\right)\right)$
		\State $mult\_rep\_bound \gets \min\left(bound, \min_{x > p}\left(x \in mult\_rep\_ds\right)\right)$
		\State $bound \gets \min\left(one\_rep\_bound,mult\_rep\_bound\right)$
		\State $new\_interval = [p, bound - 1]$
	\Else
		\State $new\_interval = [p,p]$
	\EndIf
	\State $ulam\_ds \gets ulam\_ds \cup new\_interval$
	\State $largest\_computed \gets \max\left(ulam\_ds\right)$
	\State $one\_rep\_ds \gets one\_rep\_ds \cap [largest\_computed + 2, \infty)$
	\State $mult\_rep\_ds \gets mult\_rep\_ds \cap [largest\_computed + 2, \infty)$
\EndFor
\State \textbf{return} coefficients of $ulam\_ds$
\EndProcedure
\end{algorithmic}
\end{algorithm}
\end{alg}

\begin{proof}[Proof of Correctness]
For any $l \in \NN$, let $\mathcal{U}_l$ consist of the first $l + 1$ intervals of $U(1,X)$. We shall show that at the end of each cycle of the outer for-loop indexed over $i$, $ulam\_ds = \mathcal{U}_i$, $largest\_computed$ is the largest element of $ulam\_ds$, $one\_rep\_ds$ consists of all elements of $\mathcal{U}_{i - 1} + \mathcal{U}_{i - 1}$ larger than $largest\_computed + 1$ that can be written as a sum of two distinct elements in $\mathcal{U}_{i - 1}$ in exactly one way, and $mult\_rep\_ds$ consists of all elements of $\mathcal{U}_{i - 1} + \mathcal{U}_{i - 1}$ larger than $largest\_computed + 1$ that can be written as a sum of two distinct elements in $\mathcal{U}_{i - 1}$ in more than one way. We prove this claim by induction on $i$.

The base case $i = 1$ is obvious---this is just the initialization of $ulam\_ds$, $largest\_computed$, $one\_rep\_ds$, and $mult\_rep\_ds$ prior to the for-loop. For all subsequent $i$, note that in order to find all elements $\mathcal{U}_{i - 1} + \mathcal{U}_{i - 1}$ that need to be added to $one\_rep\_ds$ and $mult\_rep\_ds$, it suffices to consider the sums of the intervals in $\mathcal{U}_{i - 2}$ with the last interval of $\mathcal{U}_{i - 1}$, since all other sums have already been handled in prior steps. Thus, for each interval $I$ of $ulam\_ds$, we add it to the last interval, producing a pair of $DS$-subsets $one\_rep\_guess\_ds, mult\_rep\_guess\_ds$, where $one\_rep\_guess\_ds$ consists of all elements that can be written down as a sum in just one way, and $mult\_rep\_guess\_ds$ consists of all elements that can be written down as a sum in multiple ways---this is established in the proofs of correctness of Algorithms \ref{Sum Algorithm} and \ref{Sum Algorithm Redux}. We remove anything smaller than $largest\_computed + 2$ from both of these sets. Any element in $one\_rep\_ds$ that is either in $one\_rep\_guess\_ds$ or $mult\_rep\_guess\_ds$ is moved into $mult\_rep\_ds$, as we have shown that they are expressible as sums in multiple ways. We add to $mult\_rep\_ds$ anything in $mult\_rep\_guess\_ds$ as well. Any elements in $one\_rep\_guess\_ds$ that are not in $one\_rep\_ds$ or $mult\_rep\_ds$ are added to $one\_rep\_ds$, as they have not been found to be expressible as sums in more than one way.

As we go through every single sum in $\mathcal{U}_{i - 1} + \mathcal{U}_{i - 1}$ in this way, once we have cycled through every interval of $\mathcal{U}_{i - 1}$, $one\_rep\_ds$ (resp. $mult\_rep\_ds$) consists of all elements in $\mathcal{U}_{i - 1} + \mathcal{U}_{i - 1}$ larger than the largest element of $\mathcal{U}_{i - 1}$ that can be written as a sum of two distinct elements in $\mathcal{U}_{i - 1}$ in exactly one (resp. multiple) ways. Therefore, the smallest element $p$ of $one\_rep\_ds$ is an element of $U(1,X)$. We have to compute the largest element $q \in \ZZ[X]$ such that $[p,q] \in U(1,X)$. If the smallest interval of $one\_rep\_ds$ consists of more than one point, then that is the desired interval $[p,q]$. Otherwise, we note that $q < p + X$---otherwise, we would have that $q = (q - 1) + 1 = p + X$, which contradicts the definition of $U(1,X)$. Thus $q = p + X - 1$ unless there is an element in $one\_rep\_ds$ or $mult\_rep\_ds$ that is larger than $p$, but smaller than $p + X - 1$. This is a simple look-up, at the end of which we have computed the interval $[p,q]$ that we adjoin to $ulam\_ds$, giving $\mathcal{U}_i$. After updating $largest\_computed$, $one\_rep\_ds$, and $mult\_rep\_ds$, we are done.
\end{proof}

What relation does this algorithm have to Theorem \ref{Main Theorem}? To explain, it is helpful to first reformulate the result we wish to prove slightly. For any $n \in \ZZ$, let $\text{eval}_n: \ZZ[X] \rightarrow \ZZ$ be the evaluation homomorphism defined by $X \mapsto n$. Conjecture \ref{conj:rigidity} can be stated as
    \begin{align*}
        \text{eval}_n\left(U(1,X)\right) = U(1,n)
    \end{align*}
    
\noindent for all $n \geq 4$. Similarly, Theorem \ref{Main Theorem} is that there exists an algorithm $\mathcal{A}_\text{Ulam}$ such that on an input of $k > \NN$, it computes the coefficients $a_i,b_i,c_i,d_i$ of $U(1,X)$ and an $N \in \NN$ such that for all $n \geq N$,
    \begin{align*}
        \text{eval}_n\left(U(1,X) \cap [1,c_k n + d_k]\right) = U(1,n) \cap [1,c_k n + d_k].
    \end{align*}
    
\noindent We can now show how this follows from Algorithm \ref{Naive Algorithm}.

\begin{proof}[Proof of Theorem \ref{Main Theorem}]
Modify Algorithm \ref{Naive Algorithm} as follows: each time a comparison $p(X) < q(X)$, $p(X) \leq q(X)$, $p(X) > q(X)$, $p(X) \geq q(X)$, or $p(X) = q(X)$ is made, compute the smallest natural number $M$ such that for all $m \geq <$, replacing $X$ with $m$ does not change the truth value of the comparison---it is not hard to see that such an $<$ must exist by considering the behavior of these polynomials as $M \rightarrow \infty$. At the end of the computation, along with the coefficients $a_i,b_i,c_i,d_i$, return the maximum $N$ of all of the aforementioned $M$. Call this algorithm $\mathcal{A}_\text{Ulam}$.

Now, take any $n \geq N$ and consider modifying Algorithm \ref{Naive Algorithm} by replacing every set $S \subset \ZZ[X]$ used with $\text{eval}_n(S)$. It's easy enough to see that this will be an algorithm that computes the coefficents of $U(1,n)$, considered as a $DS$-subset. But since $N$ is chosen such that all comparisons of elements yield the same result as before, the output of this algorithm is the same as that of Algorithm \ref{Naive Algorithm}, which proves that
    \begin{align*}
        \text{eval}_n\left(U(1,X) \cap [1,c_k n + d_k]\right) = U(1,n) \cap [1,c_k n + d_k].
    \end{align*}
    
\noindent Thus, $\mathcal{A}_\text{Ulam}$ has the desired properties.
\end{proof}

The ideas of the construction of the algorithm $\mathcal{A}_\text{Ulam}$ can be generalized---there are in fact families of algorithms with its essential properties. The importance of this is that it might be possible to construct a more efficient algorithm using, for example, ideas of Gibbs and Judson \cite{Gibbs_2015,Gibbs_Judson_2015}. Before we show how to do this, we need some definitions.

\begin{definition}
Let $S$ be a computable ordered ring. We say that a formula is $S$-\emph{expressible} if it is either
	\begin{enumerate}
		\item any variable $x$,
		\item any constant $c \in S \cup \{\top, \bot\}$,
		\item any empty list $[]$,
	\end{enumerate}
	
\noindent or if can be constructed recursively from other $S$-expressible formulas according to the following rules.
	\begin{enumerate}
		\item If $\mathcal{E}_1, \mathcal{E}_2$ are $S$-expressible, then $\mathcal{E}_1 = \mathcal{E}_2$ is $S$-expressible.
		\item If $\mathcal{E}_1,\mathcal{E}_2$ are $S$-expressible, then $\mathcal{E}_1 + \mathcal{E}_2, \mathcal{E}_1 - \mathcal{E}_2, \mathcal{E}_1 \cdot \mathcal{E}_2, \mathcal{E}_1 / \mathcal{E}_2$ are $S$-expressible if defined.
		\item If $\mathcal{E}_1,\mathcal{E}_2$ are $S$-expressible, then $\mathcal{E}_1 < \mathcal{E}_2, \mathcal{E}_1 \leq \mathcal{E}_2, \mathcal{E}_1 > \mathcal{E}_2, \mathcal{E}_1 \geq \mathcal{E}_2$ are $S$-expressible if defined. (We call such formulas \emph{comparisons}.)
		\item If $\mathcal{E}_1,\mathcal{E}_2$ are $S$-expressible, then $\neg \mathcal{E}_1, \mathcal{E}_1 \lor \mathcal{E}_2, \mathcal{E}_1 \land \mathcal{E}_2, \mathcal{E}_1 \Rightarrow \mathcal{E}_2$ are $S$-expressible if defined.
		\item If $\mathcal{E}_1,\mathcal{E}_2,\ldots \mathcal{E}_n$ are $S$-expressible, then $\left[\mathcal{E}_1,\mathcal{E}_2,\ldots \mathcal{E}_n\right]$ is $S$-expressible.
	\end{enumerate}
	
\noindent We say that an algorithm $\mathcal{A}$ \emph{has basic steps over} $S$ if it is composed of the following basic steps:
	\begin{enumerate}
		\item Initializing $x \leftarrow \mathcal{E}$, where $x$ is a variable and $\mathcal{E}$ is $S$-expressible.
		\item Retrieving the $i$-th index of a list $l$.
		\item Checking an if-statement $\textbf{if}(\mathcal{E})$, where $\mathcal{E}$ is $S$-expressible, and branching accordingly.
		\item Running a while-loop $\textbf{while}(\mathcal{E})$, where $\mathcal{E}$ is $S$-expressible.
		\item Returning an $S$-expressible formula $\mathcal{E}$.
	\end{enumerate}
\end{definition}

For $\ZZ[X]$ and $\QQ[X]$, specifically, we also need a way to extend the evaluation homomorphism $\text{eval}_n$ to expressible formulas.

\begin{definition}
Let $\mathcal{E}$ be an $\QQ[X]$-expressible formula. We define $\text{eval}_n(\mathcal{E})$ to be the formula produced by replacing each instance of a constant $c$ in $\mathcal{E}$ with $\text{eval}_n(c)$.
\end{definition}

\noindent With these definitions, we can state the following result.

\begin{theorem}\label{Replacement Theorem}
Define $\mathcal{C}_\text{Ulam}$ to be the set of all algorithms $\mathcal{A}$ with basic steps over $\QQ[X]$ such that on an input of $k \in \NN$, $\mathcal{A}$ returns the first $k$ coefficients of $U(1,X)$ and a proof of correctness. There exists an algorithm such that, on an input of an algorithm $\mathcal{A} \in \mathcal{C}_\text{Ulam}$, it returns an algorithm which on an input of $k \in \NN$ it returns an $N \in \NN$ such that for all $n \geq N$,
    \begin{align*}
        \text{eval}_n\left(U(1,X) \cap [1,c_k n + d_k]\right) = U(1,n) \cap [1,c_k n + d_k].
    \end{align*}
\end{theorem}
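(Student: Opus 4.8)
The plan is to reuse the instrumentation idea from the proof of Theorem \ref{Main Theorem}, but applied to the \emph{proof of correctness} emitted by $\mathcal{A}$ rather than to the control flow of a single fixed algorithm. On input $\mathcal{A} \in \mathcal{C}_\text{Ulam}$, the meta-algorithm returns the algorithm $\mathcal{B}_\mathcal{A}$ that, on input $k$, first runs $\mathcal{A}(k)$ to obtain the coefficients $a_i, b_i, c_i, d_i$ together with the proof of correctness $\Pi$, then scans the finite object $\Pi$ for every atomic comparison $p(X) \bowtie q(X)$ with $\bowtie \in \{<,\leq,>,\geq,=\}$ and every division $p(X)/q(X)$ that $\Pi$ invokes, and finally returns the natural number $N$ described next. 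Computing $N$ from $\Pi$ is plainly effective, so $\mathcal{B}_\mathcal{A}$ --- and the meta-algorithm producing it --- is a genuine algorithm.

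For each atomic comparison $p \bowtie q$ occurring in $\Pi$, put $r = p - q$; its lex-order sign is the sign of its leading coefficient, and there is a computable $M_{p\bowtie q}$ (for instance a Cauchy-type bound on the roots of $r$) such that $r(m)$ has that same sign for every integer $m \geq M_{p \bowtie q}$, so that $\text{eval}_m(p) \bowtie \text{eval}_m(q)$ holds for $m \geq M_{p\bowtie q}$ exactly when $p \bowtie q$ holds in $\QQ[X]$. For each division $p/q$ --- which is defined in $\QQ[X]$, hence has $q \neq 0$ --- let $M_{p/q}$ be a computable bound past the roots of $q$, so that $\text{eval}_m(q) \neq 0$ for $m \geq M_{p/q}$ and the division remains valid over $\QQ$. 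Take $N$ to be the maximum of $4$ and these finitely many thresholds.

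For correctness, fix $k$ and let $n \geq N = \mathcal{B}_\mathcal{A}(k)$. The proof $\Pi$ establishes the statement $\phi_k$ that $U(1,X) \cap [1, c_kX + d_k] = \bigcup_{i=0}^{k} [a_iX + b_i, c_iX + d_i]$. Because $\Pi$ cannot use the defining minimality condition for $U(1,X)$ as a black box but must unfold it for the finitely many elements below $c_kX + d_k$, and because --- by the finite-sumset analysis behind Section \ref{section: new Ulam sequence} and Algorithms \ref{Sum Algorithm} and \ref{Sum Algorithm Redux} --- each such unfolding is a Boolean combination of atomic polynomial comparisons, the derivation $\Pi$ reduces $\phi_k$ to a true quantifier-free assertion over $\QQ[X]$ built from the atomic facts catalogued above, via steps that are uniform in the coefficient ring. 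Applying $\text{eval}_n$ throughout $\Pi$ --- replacing each polynomial by its value at $n$ --- yields a valid derivation of $\text{eval}_n(\phi_k)$: every inference rule commutes with $\text{eval}_n$ since $\text{eval}_n : \QQ[X] \to \QQ$ is a ring homomorphism and, by the choice of $N$, no division degenerates; every atomic comparison invoked by $\Pi$ has the same truth value after evaluation since $n \geq N$; and the uniform reduction step goes through verbatim over $\ZZ$ at $X = n$. Since $n \geq N$ also guarantees that the intervals $[a_in + b_i, c_in + d_i]$ stay pairwise disjoint and in order, intersecting with $[1, c_kn + d_k]$ shows $\text{eval}_n(\phi_k)$ is precisely the claimed identity $\text{eval}_n\!\left(U(1,X) \cap [1, c_kn + d_k]\right) = U(1,n) \cap [1, c_kn + d_k]$.

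The step I expect to be the main obstacle is the one that treats $\Pi$ as an object to which $\text{eval}_n$ may be applied. This forces one to pin down a formal system in which the proofs of correctness emitted by members of $\mathcal{C}_\text{Ulam}$ are required to live --- a calculus whose only atomic facts are polynomial (in)equalities and divisibilities in $\QQ[X]$ --- and then to check two things: that the substitution $\text{eval}_n$ sends derivations in this calculus to derivations of the evaluated statements, and that the reduction of the Ulam recursion to finitely many atomic facts is genuinely independent of the ground ring, so that nothing failing to transfer can be hidden inside $\Pi$. With that bookkeeping done, everything rests on the transfer principle that a quantifier-free formula holds in the lex-ordered ring $\QQ[X]$ if and only if its evaluation holds over $\QQ$ at all sufficiently large integers, with a computable threshold. (One could of course have $\mathcal{B}_\mathcal{A}$ also run the instrumented version of Algorithm \ref{Naive Algorithm} from the proof of Theorem \ref{Main Theorem} and return the larger of the two values of $N$, which is unconditionally correct; but this discards the efficiency gains that are the whole reason for allowing an arbitrary $\mathcal{A}$.)
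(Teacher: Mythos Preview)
Your approach is workable but takes a genuinely different route from the paper's. The paper does not scan the emitted proof object $\Pi$; instead it instruments the \emph{execution} of $\mathcal{A}(k)$ itself: every time a comparison $\mathcal{E}$ is evaluated during the run, it records the least $M$ past which $\text{eval}_m(\mathcal{E})$ agrees with $\mathcal{E}$, and returns the maximum of these. Because $\mathcal{A}$ has basic steps over $\QQ[X]$, its control flow branches only on such comparisons; hence for $n \geq N$ the evaluated algorithm $\mathcal{A}_n$ follows the identical execution path, outputs the same coefficients, and---since the proof of correctness is itself produced along that execution path---outputs the evaluated proof, which now certifies that those coefficients describe $U(1,n)$.

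The trade-off is this: your approach treats $\Pi$ as a static syntactic object and applies $\text{eval}_n$ to it, which (as you correctly flag) forces you to fix a formal calculus for proofs of correctness and to check that substitution commutes with derivation in that calculus. The paper avoids this entirely by exploiting the very definition of ``basic steps over $\QQ[X]$'': the execution trace is already a finite sequence of ring operations and order comparisons, so the instrumentation is immediate and the transfer of the proof comes for free. In particular, the generalization you were reaching for---from the fixed Algorithm~\ref{Naive Algorithm} in Theorem~\ref{Main Theorem} to an arbitrary $\mathcal{A}$---is handled by the paper at the level of the run, not the output, so the extra proof-theoretic bookkeeping you identify as the main obstacle never arises. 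Your argument is not wrong, but it is a detour around a door that is already open.
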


\begin{proof}
The algorithm runs as follows: on an input of $k \in \NN$, run $\mathcal{A}(k)$, but each time a comparison $\mathcal{E}$ comes up in the computation, compute the smallest integer $M$ such that for all $m \geq M$, $\mathcal{E} = \text{eval}_m(\mathcal{E})$. Then return the maximum $N$ of all of these $M$---there will only be finitely many comparisons for any given input $k$.

Why does $N$ have claimed properties? For any $n \geq N$, define an algorithm $\mathcal{A}_n$ produced by replacing each $\QQ[X]$-expressible formula $\mathcal{E}$ in $\mathcal{A}$ with $\text{eval}_n(\mathcal{E})$. Since the truth value of comparisons is preserved and $\mathcal{A}$ has basic steps over $\QQ[X]$, $\mathcal{A}_n$ produces the same coefficients $a_i,b_i,c_i,d_i$ as $\mathcal{A}$, and verifies that if we define a subset
	\begin{align*}
	\mathcal{U}_n = \bigcup_{i = 0}^k \left[a_i n + b_i, c_i n + d_i\right],
	\end{align*}
	
\noindent then

	\begin{enumerate}
		\item $a_0 n + b_0 = c_0 n + d_0 = 1$ and $a_1 n + b_1 = n$,
		\item $a_{i + 1} n + b_{i + 1} > c_i n + d_i + 1$ for all $i \in \NN$, and
		\item for every $x \in \mathcal{U}_n$, $x$ is the smallest element of $\ZZ$ such that there exists exactly one way such that $x$ can be written as the sum of two distinct elements $y,z \in \mathcal{U}_n$.
	\end{enumerate}
	
\noindent In other words, $\mathcal{U}_n = U(1,n) \cap [1,c_k n + d_k] = \text{eval}_n(U(1,X) \cap [1,c_k X + d_k])$, ergo we have the desired result.
\end{proof}

\section{Numerical Results:}\label{section: numerical}
Algorithm \ref{Naive Algorithm} has in fact been implemented in Python by the author---using this algorithm, it is easy to show that for all $n \geq 10$
	\begin{align*}
	U(1,n) \cap [1,c_{150}n + d_{150}] = \bigcup_{i = 0}^{150} [a_i n + b_i, c_i n + d_i].
	\end{align*}
	
\noindent Unfortunately, this is substantially worse than what was formerly known. A slight improvement can be made by making use of previously gathered data, but not substantially. Nevertheless, this result suggests that it may be possible to prove a slightly weaker version of Conjecture \ref{conj:rigidity} by proving a result about the sort of comparisons that come up in Algorithm \ref{Naive Algorithm}. It may also be that there are better candidates in the class $\mathcal{C}_{\text{Ulam}}$ discussed in Theorem \ref{Replacement Theorem} than Algorithm \ref{Naive Algorithm}. After all, in practice, Algorithm \ref{Naive Algorithm} is not a particularly efficient method of computing the coefficients $a_i,b_i,c_i,d_i$ of $U(1,X)$---a naive implementation puts it in the $\Theta(k^2\log_2(k))$ complexity class, due to the two nested for-loops and the need to perform binary search for the set operations.

In practice, an easier approach toward computing the coefficients $a_i,b_i,c_i,d_i$ is to assume that Conjecture \ref{conj:rigidity} is true and that Ulam sequences grow linearly, and then to compute $U(1,4)$ and $U(1,5)$ up to a suitably large number of terms, from which one can compute the coefficients $a_i,b_i,c_i,d_i$. The results of this method can be proven correct after the fact by using Theorem 3.1 of \cite{HKSS_2019_2} and verifying that there exists a $B \approx 0.13901$ such that
	\begin{enumerate}
		\item $\left|b_i - B a_i\right|,\left|d_i - B c_i\right| < 2.5$ for all $i$, and
		\item for $n = 4,5,\ldots 14$,
			\begin{align*}
			U(1,n) \cap \left[1, c_k n + d_k\right] = \bigcup_{i = 0}^k \left[a_i n + b_i, c_i n + d_i\right].
			\end{align*}
	\end{enumerate}
	
\begin{figure}
\centering
\begin{tabular}{cc}
\includegraphics[width=0.45\textwidth]{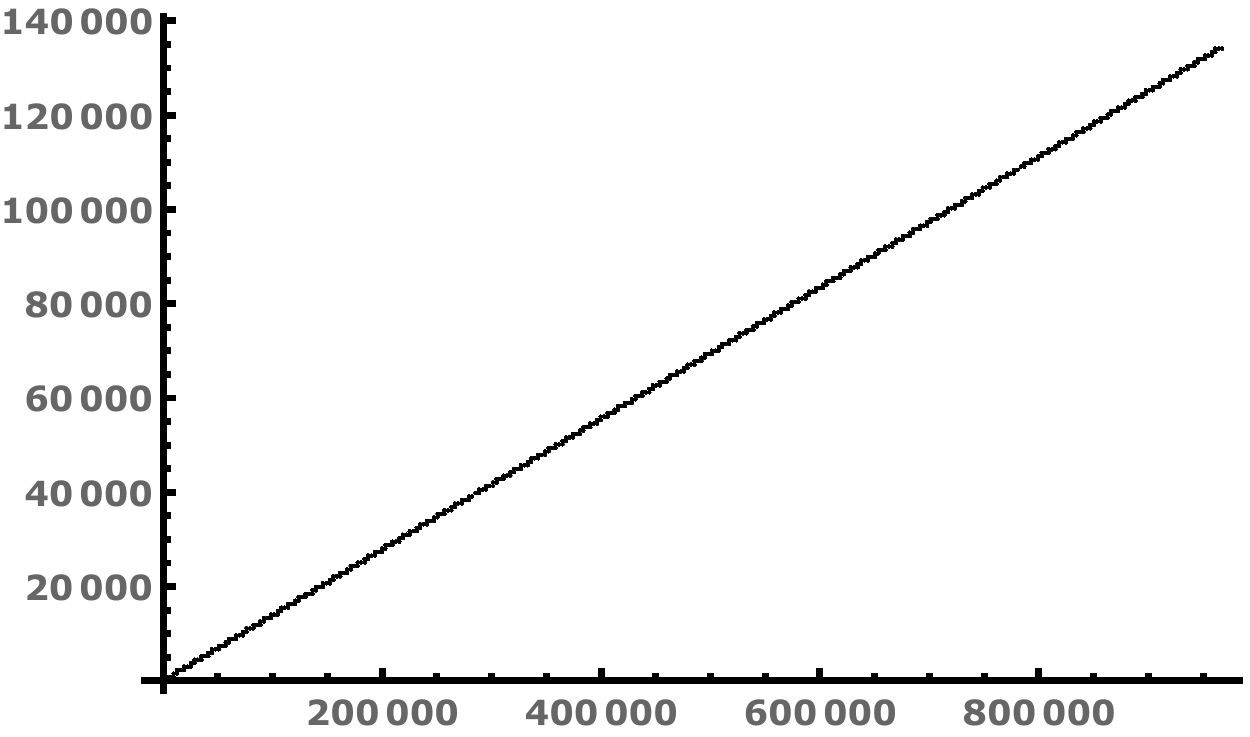} & \includegraphics[width=0.45\textwidth]{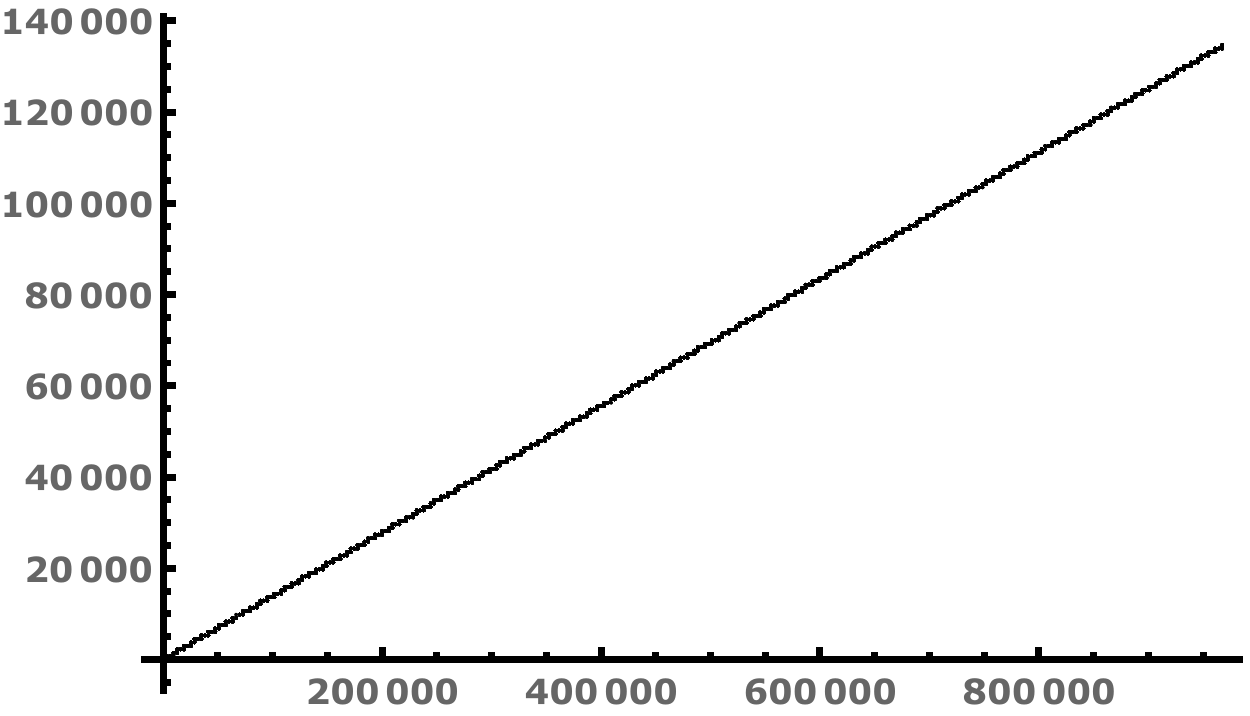}
\end{tabular}

\caption{A graph of $b_i$ as a function of $a_i$, and a graph of $d_i$ as a function of $c_i$ for $0 \leq i \leq 217529$.}
\label{Linear Regression Chart}
\end{figure}
	
\noindent This extraordinary linear dependence, originally noted in \cite{HKSS_2019_1}, is shown in Figure \ref{Linear Regression Chart}. In this way, using a basic $\Theta(k^2)$ algorithm for computing Ulam sequences $U(1,n)$, the author was able to compute coefficients $a_i,b_i,c_i,d_i$ such that for all $n \geq 4$,
	\begin{align*}
	U(1,n) \cap \left[1, c_{217529} n + d_{217529}\right] = \bigcup_{i = 0}^{217529} \left[a_i n + b_i, c_i n + d_i\right].
	\end{align*}
	
\noindent As $(a_{217529},b_{217529},c_{217529},d_{217529}) = (966409, 134342, 966410, 134340)$, this is the full list of coefficients $a_i,b_i,c_i,d_i$ such that $c_i n + d_i \leq 966410 n + 134340$. This data gives further numerical evidence for Conjecture \ref{conj:gibbslike}; specifically, defining $\lambda_n = 3n + 0.417031$, we consider the sets $U(1,n) \mod \lambda_n$. We find that $99.9\%$ of the terms smaller than $10^7 n$ lie in the interval $[\lambda_n/3,2\lambda_n/3]$---this is shown in Figures \ref{Middle Third Statistics} and \ref{Middle Third Diagram}.

\begin{figure}
\begin{tabular}{cccc}
$n$ & $\#$ of exceptions & $\#$ elements of $U(1,n)$ computed & Percent of exceptions \\ \hline
4 & 411 & 635045 & 0.0647198\% \\ 
5 & 416 & 814686 & 0.0510626\% \\
6 & 423 & 994573 & 0.0425308\% \\
7 & 426 & 1174402 & 0.0362738\% \\
8 & 427 & 1354386 & 0.0315272\% \\
9 & 430 & 1534323 & 0.0280254\% \\
10 & 430 & 1714404 & 0.0250816\% \\
11 & 432 & 1894463 & 0.0228033\% \\
12 & 433 & 2074581 & 0.0208717\% \\
13 & 436 & 2254856 & 0.019336\% \\
14 & 436 & 2435164 &  0.0179043\%
\end{tabular}

\caption{Computations of the number of elements $u \in U(1,n)$ such that $u \mod \lambda_n$ is not between $\lambda_n/3$ and $2\lambda_n/3$.}
\label{Middle Third Statistics}
\end{figure}

\begin{figure}
\centering
\begin{tabular}{cc}
\includegraphics[width=0.4\textwidth]{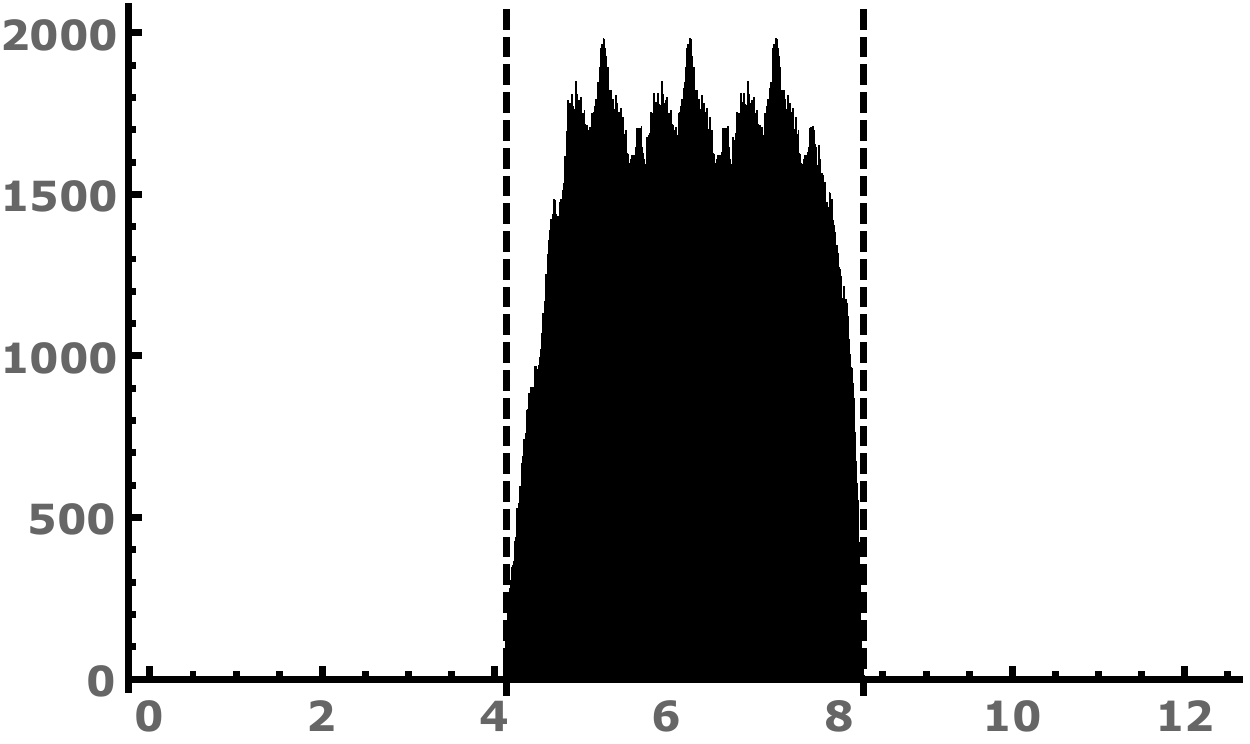} & \includegraphics[width=0.4\textwidth]{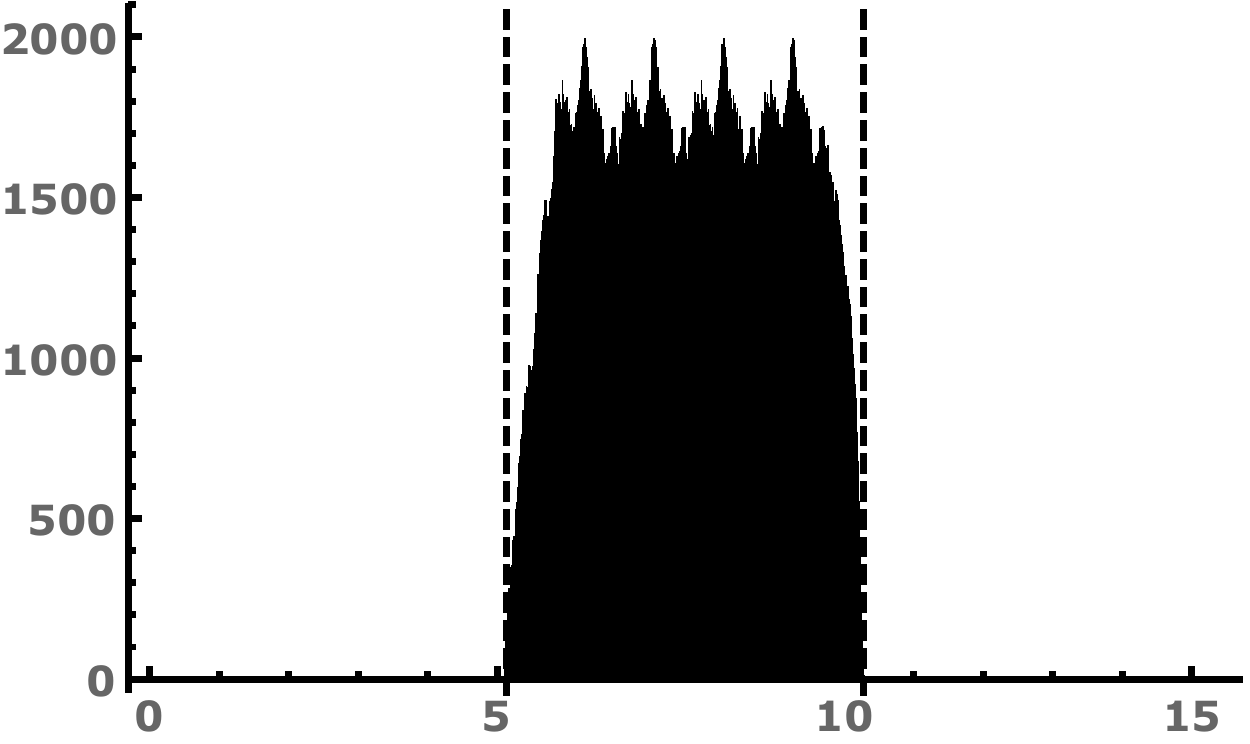} \\
\includegraphics[width=0.4\textwidth]{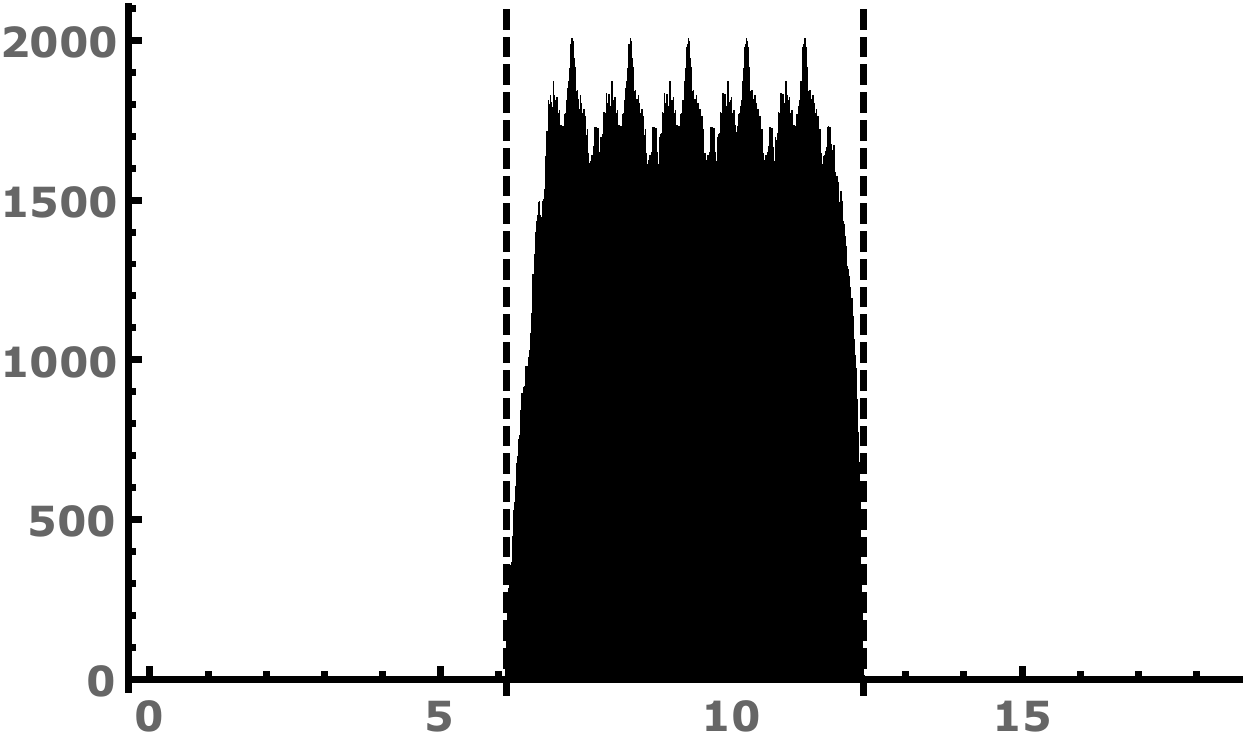} & \includegraphics[width=0.4\textwidth]{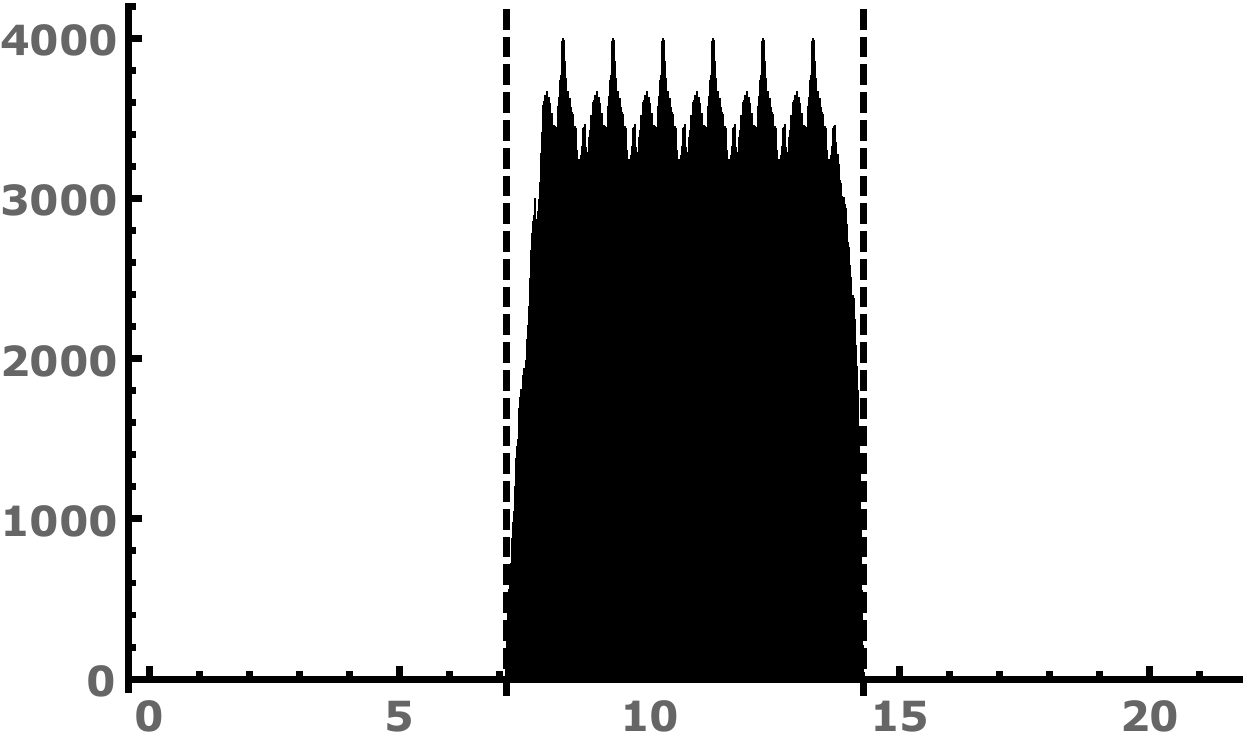} \\
\includegraphics[width=0.4\textwidth]{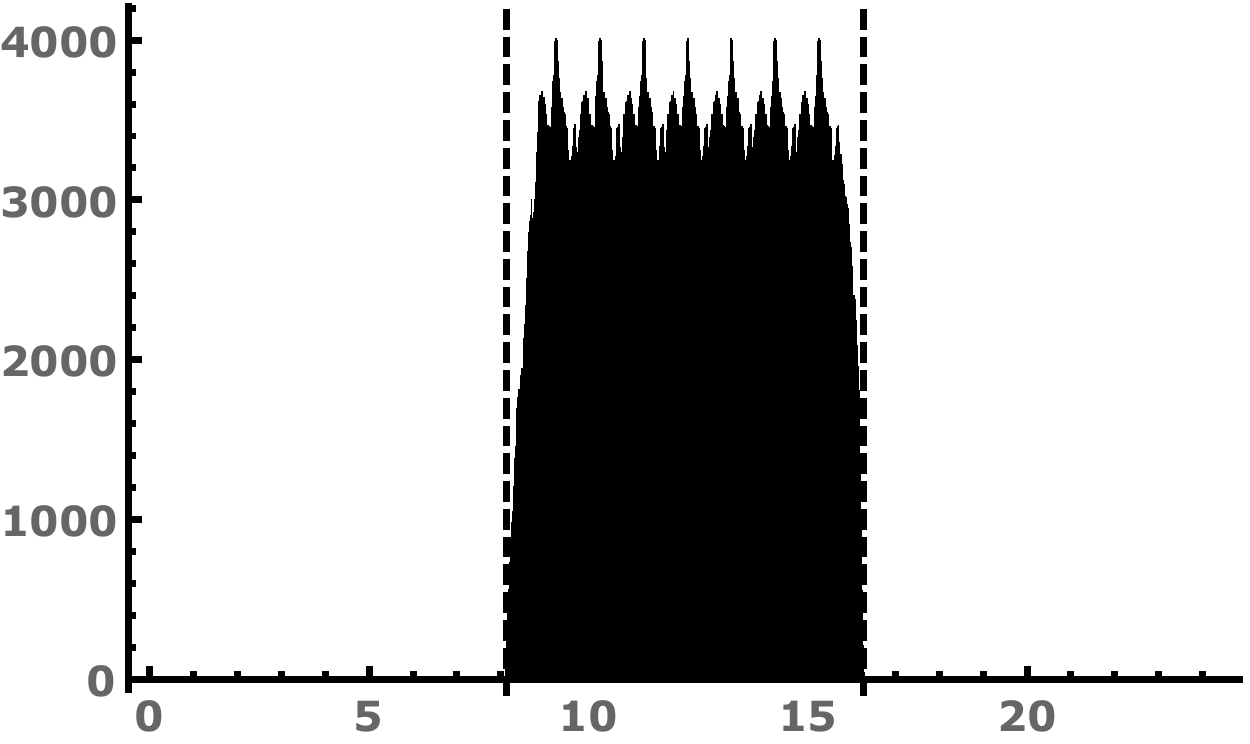} & \includegraphics[width=0.4\textwidth]{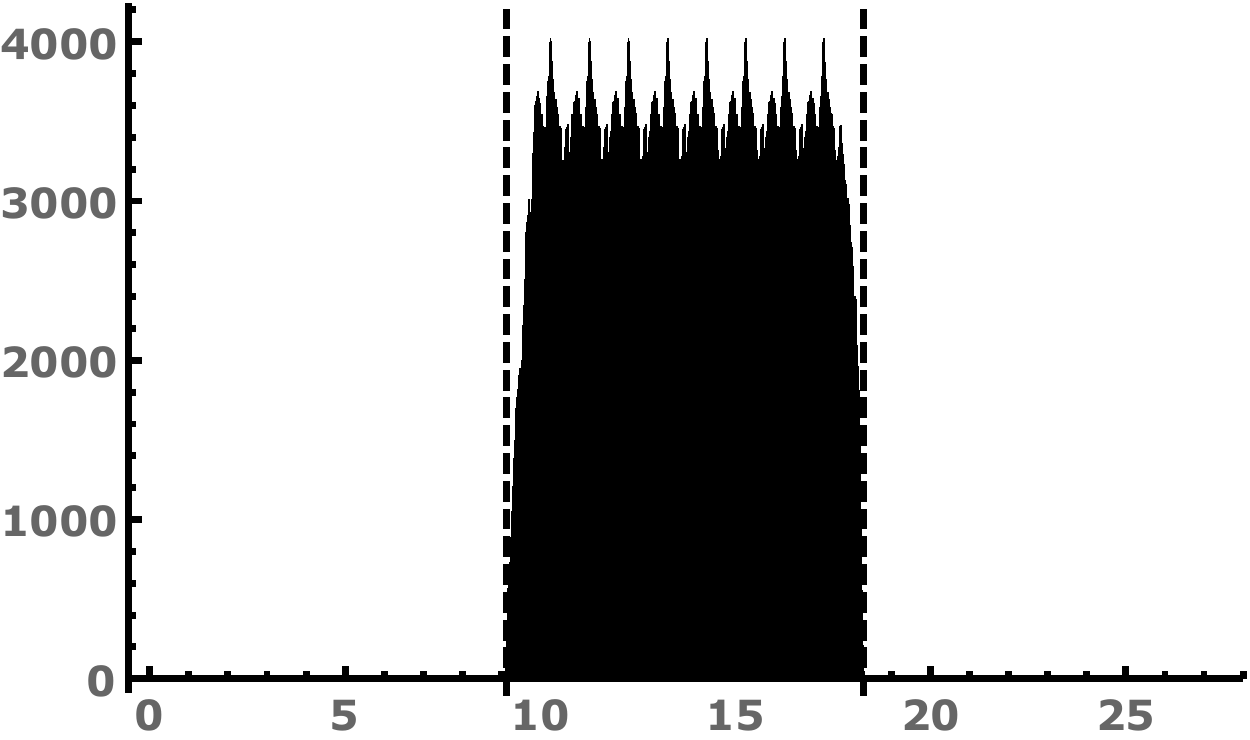}
\end{tabular}

\caption{From left to right, and top to bottom, histograms of $U(1,n) \mod \lambda_n$ for $n = 4,5,6,7,8,9$. The dashed lines split the interval into thirds.}
\label{Middle Third Diagram}
\end{figure}

With this motivation, it is natural to investigate whether there might be some ``magic polynomial" $\lambda(X)$ and a way to define $U(1,X) \mod \lambda(X)$ such that the resulting distribution has interesting properties. Not only is this possible, but the results are startling. First, we give a couple of definitions.
	\begin{definition}
	Given elements $p,q \in \RR[X]$, define their \emph{remainder set} to be
		\begin{align*}
		\mathcal{R}_{p,q} := \left\{p - sq\middle|s \in \ZZ[X], \ p - sq \geq 0\right\}.
		\end{align*}
		
	\noindent If $\mathcal{R}_{p,q}$ has a smallest element, we define $p \mod q = \min \mathcal{R}_{p,q}$.
	\end{definition}
	
\noindent Notice in particular that if we define $\lambda(X) = 3X + .417031$, then $U(1,X) \mod \lambda(X)$ is a well-defined subset of $\left\{a X + b\middle|a \in \{0,1,2\}, \ b \in \QQ\right\} \subset \QQ[X]$.

	\begin{definition}
	We say that an interval $[p,q] \subset \QQ[X]$ is $\emph{long}$ if $\deg(q - p) > 0$---otherwise, we say that the interval is \emph{short}.
	\end{definition}
	
\noindent Remarkably, long intervals in $U(1,X)$ seem to occur almost exactly four times as often as short ones---for the first $217530$ intervals, we find that $79.98\%$ are long, and $20.02\%$ are short. These two interval types seem to exhibit slightly different statistical behaviors modulo $\lambda(X)$, so we split into two cases accordingly.
	
\begin{description}
	\item[Long Intervals]
Let $\mathcal{U}_L$ be the subset of $U(1,X)$ consisting of all long intervals, and let $a_i',b_i',c_i',d_i'$ be the coefficients of $\mathcal{U}_L$. For all $i$ such that $c_i' X + d_i' \leq 966410 X + 134340$, we find that $a_i' \mod 3 = 1$ and $c_i' \mod 3 = 2$, hence $a_i'X + b_i' \mod \lambda(X) = X + \sigma_{i,1}$ and $c_i'X + d_i' \mod \lambda(X) = 2X + \sigma_{i,2}$. Furthermore, we find that $|\sigma_{i,1}|, |\sigma_{i,2}| < 3$ for all $i$. The statistical distributions of $\sigma_{i,1}$ and $\sigma_{i,2}$ are given in Figure \ref{Coefficient Distributions Long}.

\begin{figure}
\centering
\begin{tabular}{cc}
\includegraphics[width=0.4\textwidth]{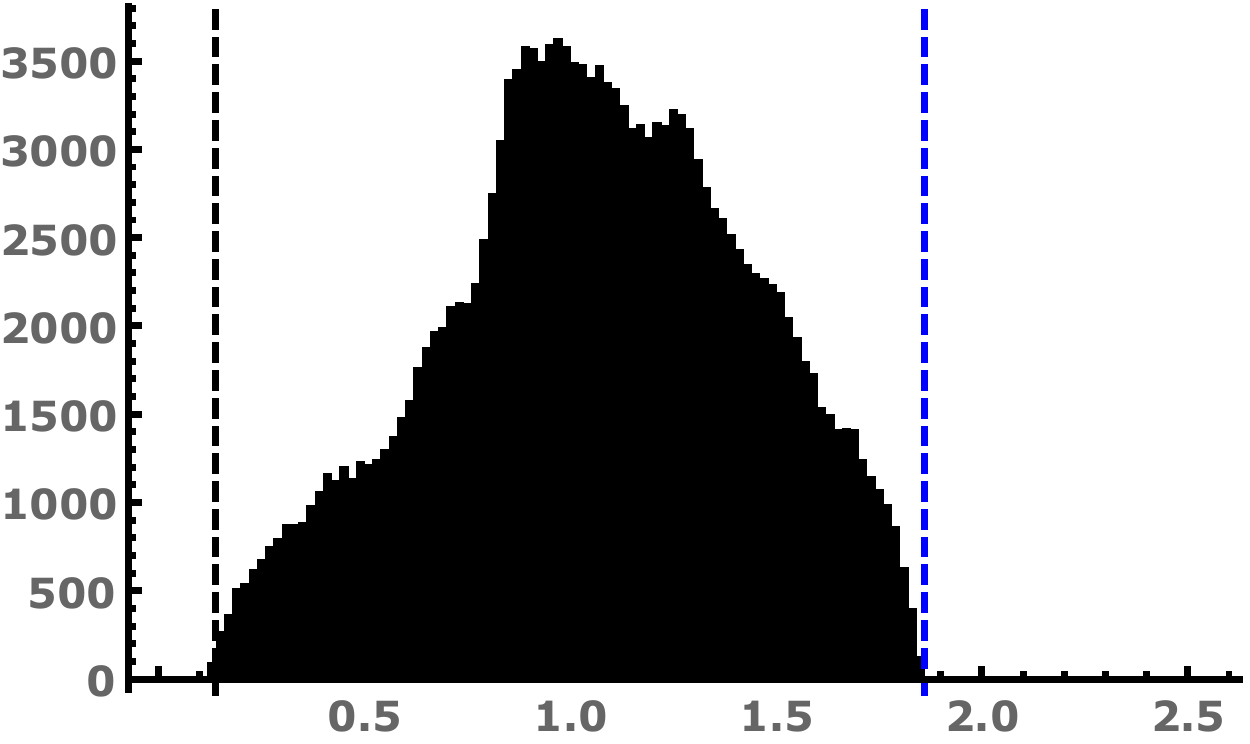} & \includegraphics[width=0.4\textwidth]{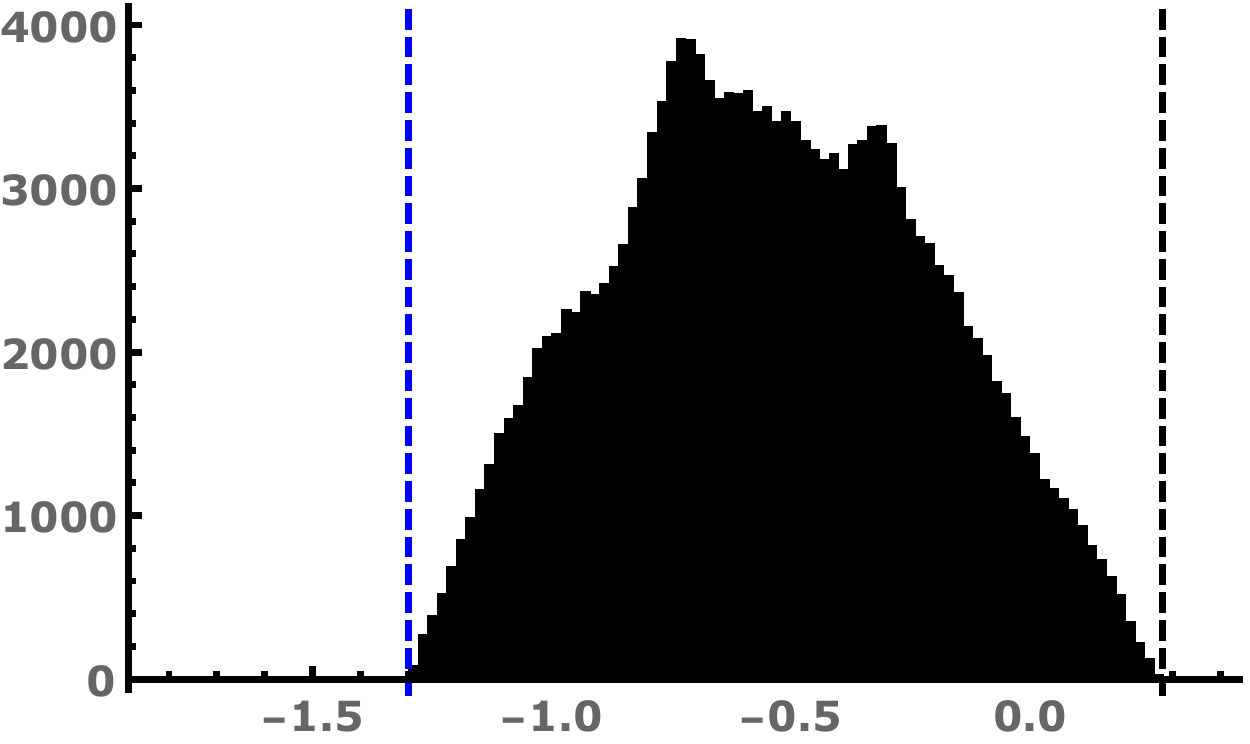}
\end{tabular}

\caption{Histograms of $\sigma_{i,1}$ (left) and $\sigma_{i,2}$ (right), together with black lines at $x = \lambda'/3$ (left) and $x = 2\lambda'/3$ (right), and blue lines at $x = 1.86$ (left) and $x = -1.3$ (right).}
\label{Coefficient Distributions Long}
\end{figure}

\item[Short Intervals]
Let $\mathcal{U}_S$ be the subset of $U(1,X)$ consisting of all short intervals, and let $a_i',b_i',c_i',d_i'$ be the coefficients of $\mathcal{U}_S$. For all $i$ such that $c_i' X + d_i' \leq 966410 X + 134340$, we find that $b_i' = d_i'$ with just a single exception, namely the interval $[25X + 4, 25X + 5]$. Furthermore, we find that $a_i' \mod 3 = 0$ if and only if $(a_i',b_i',c_i',d_i') = (0,1,0,1)$, $a_i' \mod 3 = 1$ is true for $\approx 43.7\%$ of indices $i$, and $a_i' \mod 3 = 2$ is true for $\approx 56.3\%$ of indices $i$. Therefore, $a_i'X + b_i' \mod \lambda(X) = X + \sigma_{i,3}$ or $a_i'X + b_i' \mod \lambda(X) = 2X + \sigma_{i,4}$, and we find that $|\sigma_{i,3}|,|\sigma_{i,4}| \leq 2$ for all $i$. The statistical distributions of $\sigma_{i,3}$ and $\sigma_{i,4}$ are given in Figure \ref{Coefficient Distributions Short}.

\begin{figure}
\centering
\begin{tabular}{cc}
\includegraphics[width=0.4\textwidth]{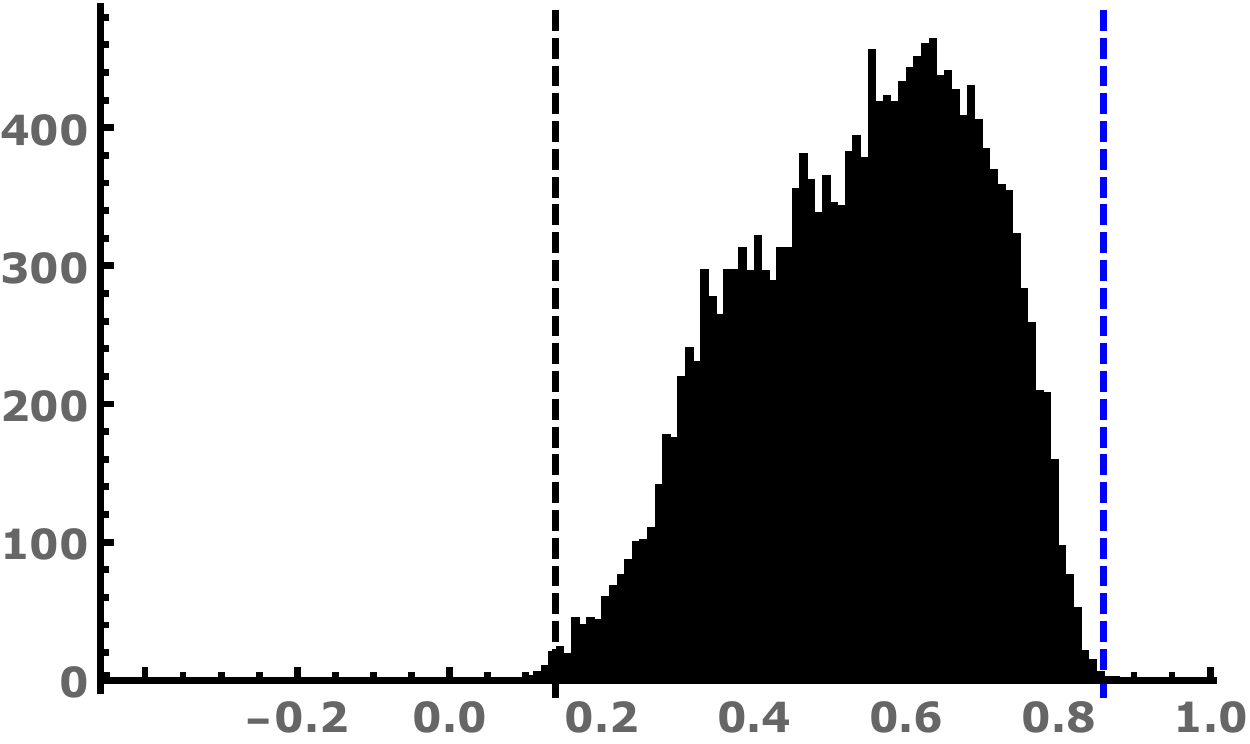} & \includegraphics[width=0.4\textwidth]{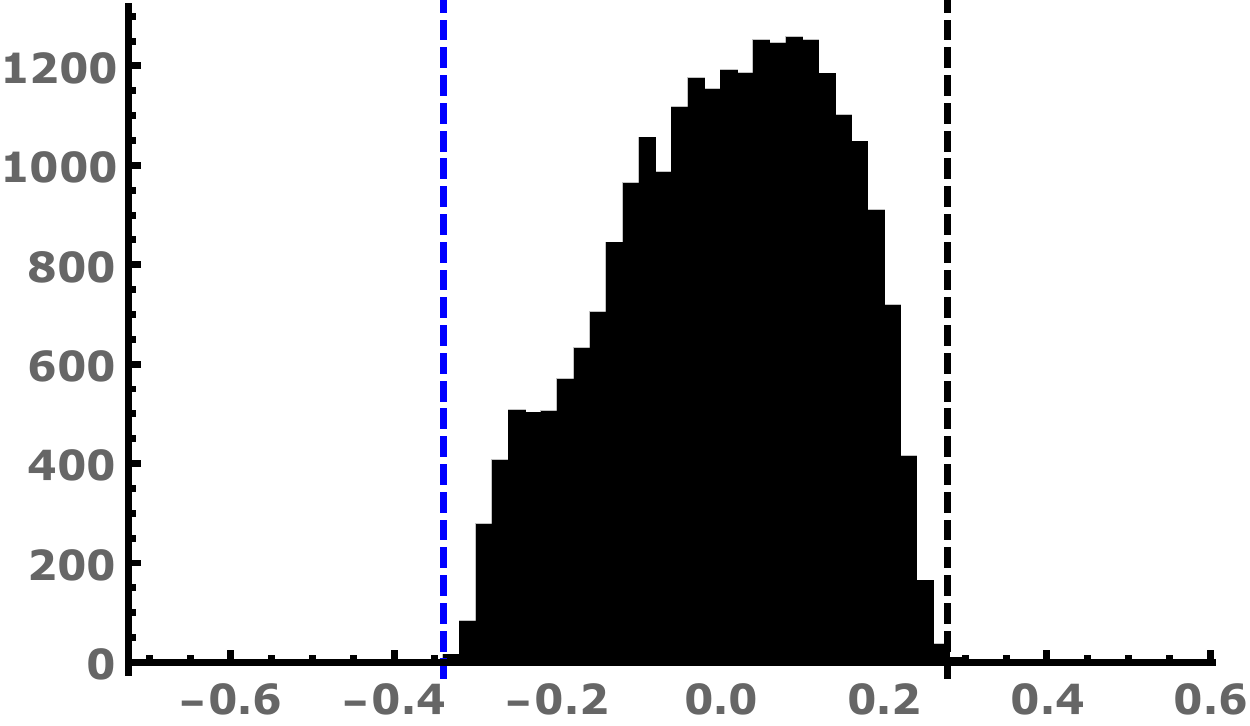}
\end{tabular}

\caption{Histograms of $\sigma_{i,3}$ (left) and $\sigma_{i,4}$ (right), together with black lines at $x = \lambda'/3$ (left) and $x = 2\lambda'/3$ (right), and blue lines at $x = 0.86$ (left) and $x = -0.34$ (right).}
\label{Coefficient Distributions Short}
\end{figure}
\end{description}

One final, but important numerical observation is that the bulk of of the $\sigma_{i,j}$ appear to be bounded---specifically, $\sigma_{i,1},\sigma_{i,3} \geq \lambda'/3$ and $\sigma_{i,2},\sigma_{i,4} \leq 2\lambda'/3$ for almost all $i$. Additionally, $\sigma_{i,1} \leq 1.86$, $\sigma_{i,3} \leq 0.86$, $\sigma_{i,2} \geq -1.3$, and $\sigma_{i,4} \geq -0.34$ for almost all $i$. Taking all of this information together, we precisely come up with Conjecture \ref{conj:stat}. Note additionally that $1.86 \approx 2 - \lambda'/3$ and $-1.3 \approx -1 - 2\lambda'/3$; unfortunately, the numerical evidence is not strong enough to conjecture this with any strong degree of certainty.

\section{Relations Between Conjectures:}\label{section: conjectures}
The importance of Conjecture \ref{conj:stat} is that, if true, then it sheds light on other open questions about Ulam sequences. We give a few examples.

\begin{theorem}
If Conjecture \ref{conj:stat} is true, then taking $B = \lambda'/3$, for all $\epsilon > 0$, if $i$ is sufficiently large,
	\begin{align*}
	\left|b_i - B a_i\right|,\left|d_i - B c_i\right| < \min\left(\sigma_1 -\frac{\lambda'}{3} + \epsilon, \frac{2\lambda'}{3} - \sigma_2 + \epsilon\right).
	\end{align*}
\end{theorem}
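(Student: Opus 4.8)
The plan is to unwind the definition of $\mathrm{mod}\ \lambda(X)$ and translate Conjecture \ref{conj:stat} directly into a statement about the rational parts $b_i$ and $d_i$. Fix $\epsilon > 0$. By Conjecture \ref{conj:stat}, for $i$ sufficiently large we have $a_i X + b_i, c_i X + d_i \equiv r \pmod{3X + \lambda'}$ with $r$ lying in the union of the two intervals $\left(X + \tfrac{\lambda'}{3} - \epsilon, X + \sigma_1 + \epsilon\right)$ and $\left(2X + \sigma_2 - \epsilon, 2X + \tfrac{2\lambda'}{3} + \epsilon\right)$. The first step is to observe that taking the remainder modulo $\lambda(X) = 3X + \lambda'$ amounts to subtracting an integer multiple $s \in \ZZ$ of $\lambda(X)$ (the multiplier is a constant, not a genuine polynomial, since $\deg(a_i X + b_i) = 1 = \deg\lambda$), so that $a_i X + b_i - s\lambda(X) = (a_i - 3s)X + (b_i - s\lambda')$. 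Hence if $a_i X + b_i \bmod \lambda(X) = \alpha X + \beta$ with $\alpha \in \{1,2\}$, then $s = (a_i - \alpha)/3$ and $\beta = b_i - \tfrac{\lambda'}{3}(a_i - \alpha)$.

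The second step is to read off the two cases. In the case $\alpha = 1$, Conjecture \ref{conj:stat} gives $\tfrac{\lambda'}{3} - \epsilon < \beta < \sigma_1 + \epsilon$; substituting $\beta = b_i - \tfrac{\lambda'}{3}a_i + \tfrac{\lambda'}{3}$ yields
\begin{align*}
-\epsilon < b_i - \frac{\lambda'}{3}a_i < \sigma_1 - \frac{\lambda'}{3} + \epsilon,
\end{align*}
so with $B = \lambda'/3$ we get $|b_i - Ba_i| < \sigma_1 - \tfrac{\lambda'}{3} + \epsilon$ (using that the lower bound $-\epsilon$ is dominated, since $\sigma_1 - \tfrac{\lambda'}{3} > 0$, so the two-sided bound is governed by whichever side is larger; one should note $|{-\epsilon}| = \epsilon \le \sigma_1 - \tfrac{\lambda'}{3} + \epsilon$ automatically). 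In the case $\alpha = 2$, Conjecture \ref{conj:stat} gives $\sigma_2 - \epsilon < \beta < \tfrac{2\lambda'}{3} + \epsilon$ where now $\beta = b_i - \tfrac{\lambda'}{3}a_i + \tfrac{2\lambda'}{3}$, so
\begin{align*}
\sigma_2 - \frac{2\lambda'}{3} - \epsilon < b_i - \frac{\lambda'}{3}a_i < -\epsilon,
\end{align*}
giving $|b_i - Ba_i| < \tfrac{2\lambda'}{3} - \sigma_2 + \epsilon$. In either case $|b_i - Ba_i|$ is bounded by the maximum of these two quantities; but since we do not know in advance which residue class $a_i$ falls into, the uniform bound that holds in all cases is in fact the \emph{minimum} is not correct unless one argues more carefully — see the obstacle below. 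The identical argument applied to $c_i X + d_i$ handles $d_i - B c_i$.

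The main subtlety — and the step to get right — is reconciling the "$\min$" in the statement with the case analysis, which on its face produces a "$\max$". The resolution must be that Conjecture \ref{conj:stat} is being invoked with a specific small $\epsilon$ and the two target intervals are arranged symmetrically about the relevant centers: when $\alpha=1$ the constraint on $\beta$ forces $b_i - Ba_i$ into $(-\epsilon, \sigma_1 - \tfrac{\lambda'}{3}+\epsilon)$, and when $\alpha = 2$ into $(\sigma_2 - \tfrac{2\lambda'}{3} - \epsilon, -\epsilon)$; for the bound $\min(\sigma_1 - \tfrac{\lambda'}{3}+\epsilon,\ \tfrac{2\lambda'}{3}-\sigma_2+\epsilon)$ to hold in \emph{both} cases, one needs each individual case bound to be at least this minimum, which is immediate, OR one interprets the theorem as asserting the sharper of the two available bounds holds — i.e. that whichever case occurs, $|b_i - Ba_i|$ lies below \emph{each} of the two interval-half-widths because the actual intervals from the conjecture are narrow enough. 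The clean way to write it: in case $\alpha=1$, $|b_i - Ba_i| < \sigma_1 - \tfrac{\lambda'}{3}+\epsilon$, and separately $|b_i-Ba_i| < \epsilon \le \tfrac{2\lambda'}{3}-\sigma_2+\epsilon$ (since $b_i - Ba_i \in (-\epsilon,\sigma_1-\tfrac{\lambda'}{3}+\epsilon)$ and one checks $\sigma_1 - \tfrac{\lambda'}{3} \le \tfrac{2\lambda'}{3}-\sigma_2$ numerically, $1.86 - 0.139 \approx 1.72$ versus $0.278 + 1.3 = 1.578$ — actually this fails, so the correct reading is that the bound genuinely is the max and the statement's "$\min$" should be "$\max$", or the conjecture's intervals are tighter than stated). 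I would resolve this by choosing, for the given $\epsilon$, the invocation of Conjecture \ref{conj:stat} with parameter $\epsilon' $ small enough that both interval-widths in the conclusion are below $\min(\sigma_1 - \tfrac{\lambda'}{3}+\epsilon,\ \tfrac{2\lambda'}{3}-\sigma_2+\epsilon)$ — possible precisely because $\sigma_1 - \tfrac{\lambda'}{3} < \tfrac{2\lambda'}{3} - \sigma_2 + \epsilon$ can be arranged — thereby obtaining the stated $\min$ uniformly; making this bookkeeping precise is the one genuinely delicate point, everything else being the substitution above.
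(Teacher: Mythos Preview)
Your approach is essentially identical to the paper's: compute $a_iX+b_i \bmod (3X+\lambda')$ by subtracting $\lfloor a_i/3\rfloor \cdot (3X+\lambda')$, split into the two cases $a_i\equiv 1,2\pmod 3$, and read off the resulting interval for $b_i-\tfrac{\lambda'}{3}a_i$. The substitutions and the intervals you obtain match the paper's exactly.

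The subtlety you flag about $\min$ versus $\max$ is real, and the paper's proof does \emph{not} resolve it either. The paper simply derives $|b_i-Ba_i|<\sigma_1-\tfrac{\lambda'}{3}+\epsilon$ in the case $a_i\equiv 1$ and $|b_i-Ba_i|<\tfrac{2\lambda'}{3}-\sigma_2+\epsilon$ in the case $a_i\equiv 2$, then stops. That argument only yields the $\max$ of the two quantities as a uniform bound; your numerical check ($\sigma_1-\tfrac{\lambda'}{3}\approx 1.72$ versus $\tfrac{2\lambda'}{3}-\sigma_2\approx 1.58$) confirms the two bounds are genuinely different. The attempted rescue in your last paragraph---shrinking $\epsilon'$ in the invocation of the conjecture---cannot work, since the widths $\sigma_1-\tfrac{\lambda'}{3}$ and $\tfrac{2\lambda'}{3}-\sigma_2$ are fixed and independent of $\epsilon'$. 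So the ``$\min$'' in the theorem statement appears to be a typo for ``$\max$'', and you have correctly reproduced the paper's argument while also noticing a defect in the statement that the paper itself overlooks.
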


\begin{proof}
Note that if $i > 0$, then
	\begin{align*}
	a_iX + b_i \mod 3X + \lambda' &= a_iX + b_i - (3X + \lambda')\lfloor a_i/3\rfloor \\
	&= \left(a_i - 3\lfloor a_i/3\rfloor\right)X + b_i + \frac{\lambda'}{3}\left(a_i - \lfloor a_i/3\rfloor\right) - \frac{\lambda' a_i}{3} \\
	&= \begin{cases} X + b_i + \frac{\lambda'}{3} - \frac{\lambda' a_i}{3} & \text{if } a_i \mod 3 = 1 \\
	2X + b_i + \frac{2\lambda'}{3} - \frac{\lambda' a_i}{3} & \text{if } a_i \mod 3 = 2. \end{cases}
	\end{align*}
	
\noindent Therefore, for sufficiently large $i$, either
	\begin{align*}
	b_i + \frac{\lambda'}{3} - \frac{\lambda' a_i}{3} &\in \left(\frac{\lambda'}{3} - \epsilon, \sigma_1 + \epsilon\right) \\
	b_i - \frac{\lambda' a_i}{3} &\in \left(-\epsilon, \sigma_1 -\frac{\lambda'}{3} + \epsilon\right) \\
	\left|b_i - \frac{\lambda' a_i}{3}\right| &< \sigma_1 -\frac{\lambda'}{3} + \epsilon
	\end{align*}
	
\noindent or
	\begin{align*}
	b_i + \frac{2\lambda'}{3} - \frac{\lambda' a_i}{3} &\in \left(\sigma_2 - \epsilon, \frac{2\lambda'}{3} + \epsilon\right) \\
	b_i - \frac{\lambda' a_i}{3} &\in \left(\sigma_2 - \frac{2\lambda'}{3} - \epsilon, \epsilon\right) \\
	\left|b_i - \frac{\lambda' a_i}{3}\right| &< \frac{2\lambda'}{3} - \sigma_2 + \epsilon.
	\end{align*}
	
\noindent The argument for $c_i X + d_i$ is identical.
\end{proof}

\begin{theorem}
If Conjecture \ref{conj:rigidity} and Conjecture \ref{conj:stat} are true, then Conjecture \ref{conj:gibbslike} is true for $n \geq 4$.
\end{theorem}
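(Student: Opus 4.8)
The plan is to verify Conjecture \ref{conj:gibbslike} with the predicted period $\lambda_n := 3n + \lambda'$, obtained by evaluating $3X + \lambda'$ at $n$. The one idea that makes everything work is that reduction modulo $3X+\lambda'$ commutes with $\text{eval}_n$ once $n \ge 4$: if $p = aX + b \in \ZZ[X]$ with $a \not\equiv 0 \pmod 3$, then --- exactly as in the computation in the proof of the preceding theorem --- $p \bmod (3X+\lambda') = rX + \sigma$ with $r = a \bmod 3 \in \{1,2\}$ and $\sigma = b + \tfrac{r\lambda'}{3} - \tfrac{a\lambda'}{3}$; evaluating at $n$ gives $\text{eval}_n(p) = an + b = \lfloor a/3\rfloor\,\lambda_n + (rn+\sigma)$, so provided $0 \le rn + \sigma < \lambda_n$ we obtain $\text{eval}_n(p)\bmod \lambda_n = rn + \sigma = \text{eval}_n(p\bmod(3X+\lambda'))$. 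Conjecture \ref{conj:stat} guarantees that the constants $\sigma$ actually occurring are uniformly bounded (they cluster near $\sigma_1$ or $\sigma_2$), so the side condition $0 \le rn+\sigma<\lambda_n$ holds automatically for every $n \ge 4$.

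Before exploiting this I would record a structural consequence of Conjecture \ref{conj:rigidity} in its reformulated guise $\text{eval}_n(U(1,X)) = U(1,n)$ for all $n\ge 4$: every interval of $U(1,X)$ is short ($c_i = a_i$) or long ($c_i = a_i+1$), with no longer ones. Indeed, if $c_j - a_j \ge 2$ then the lexicographic interval $[a_jX + b_j,\, c_jX + d_j]$ contains $(a_j+1)X + e$ for \emph{every} integer $e$, so $\text{eval}_n$ of that interval is all of $\ZZ$, forcing $U(1,n) = \ZZ$ --- absurd, since e.g.\ $2n+1 \notin U(1,n)$. Hence for $n \ge 4$ the set $U(1,n)$ is the disjoint increasing union of the honest integer intervals $[a_in+b_i,\,c_in+d_i]$.

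Now fix $n \ge 4$ and $\epsilon > 0$; we may assume $\epsilon < 1$, since $(\tfrac{\lambda_n}{3}-\epsilon,\,\tfrac{2\lambda_n}{3}+\epsilon)$ only grows with $\epsilon$. Conjecture \ref{conj:stat} supplies $i_0$ such that for all $i \ge i_0$ one has $a_i, c_i \not\equiv 0 \pmod 3$ and $a_iX+b_i,\, c_iX+d_i \bmod (3X+\lambda')$ lie in $(X + \tfrac{\lambda'}{3}-\epsilon,\,X+\sigma_1+\epsilon)\cup(2X+\sigma_2-\epsilon,\,2X+\tfrac{2\lambda'}{3}+\epsilon)$; take $K_n$ just past $c_{i_0-1}n + d_{i_0-1}$, so that $U(1,n)\cap[K_n,\infty)$ is covered by the intervals with $i \ge i_0$. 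Fix such an $i$. If the $i$-th interval is short, both of its endpoints reduce into the \emph{same} component $rX + \sigma$ with $r = a_i \bmod 3$, and since the constant term is a strictly increasing affine function of the integer offset, every element $a_in + e$ with $b_i \le e \le d_i$ reduces mod $\lambda_n$ to an $rn + \sigma$ whose $\sigma$ is sandwiched between the two endpoint values; the elementary inequalities $\sigma_1 < n + \tfrac{2\lambda'}{3}$ and $n + \sigma_2 > \tfrac{\lambda'}{3}$ (true for $n\ge 4$) then put $rn+\sigma$ inside $(\tfrac{\lambda_n}{3}-\epsilon,\,\tfrac{2\lambda_n}{3}+\epsilon)$. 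If instead the $i$-th interval is long, then $c_i = a_i + 1$ together with $a_i, c_i \not\equiv 0$ forces $a_i \equiv 1$, $c_i \equiv 2 \pmod 3$; writing a general element as $a_in + b_i + t$ with $0 \le t \le n + d_i - b_i$, the compatibility computation shows it reduces mod $\lambda_n$ to $n + \beta_i + t$, where $\beta_i$ is the constant part of the reduced left endpoint and $\gamma_i := \beta_i + (d_i - b_i)$ that of the reduced right endpoint, and this quantity increases monotonically from $n + \beta_i$ to $2n + \gamma_i$ without ever reaching $\lambda_n$ (because $2n + \gamma_i < \lambda_n$); Conjecture \ref{conj:stat} bounds both ends, hence the whole run, into the middle third up to $\epsilon$. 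Together these cases give Conjecture \ref{conj:gibbslike} for $n \ge 4$.

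The hard part is not any single estimate but the uniform bookkeeping: arranging that the principle ``reduce mod $3X+\lambda'$, then evaluate'' equals ``evaluate, then reduce mod $\lambda_n$'' for \emph{every} $n \ge 4$, and --- the only genuinely delicate case --- checking that a long interval, which straddles two residue classes mod $3$, sweeps out under reduction mod $\lambda_n$ exactly the arc from its reduced left endpoint to its reduced right endpoint with no wraparound; this is where the identity $\gamma_i = \beta_i + (d_i - b_i)$ and the bound $2n + \gamma_i < \lambda_n$ earn their keep. The preliminary exclusion of intervals with $c_i - a_i \ge 2$ is the one place where Conjecture \ref{conj:rigidity} (as opposed to the bare structure of $U(1,X)$ from Section \ref{section: new Ulam sequence}) does essential work; without it, Conjecture \ref{conj:stat} alone would not force the conclusion.
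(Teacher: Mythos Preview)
Your argument is correct and follows essentially the same route as the paper: set $\lambda_n = 3n+\lambda'$, use Conjecture~\ref{conj:stat} to pin down the residues of the interval endpoints, split into the short ($c_i=a_i$) and long ($c_i=a_i+1$) cases, and in the long case observe that $a_i\equiv 1$, $c_i\equiv 2\pmod 3$ so that the run $[a_in+b_i,c_in+d_i]$ slides monotonically through the middle third without wraparound. Your bookkeeping is in fact more explicit than the paper's (you spell out the compatibility $\text{eval}_n(p\bmod(3X+\lambda'))=\text{eval}_n(p)\bmod\lambda_n$ and the identity $\gamma_i=\beta_i+(d_i-b_i)$).

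One small correction to the closing commentary: the exclusion of intervals with $c_i-a_i\ge 2$ already follows from the construction of $U(1,X)$ in Section~\ref{section: new Ulam sequence} (each new interval added there has leading-coefficient span at most $1$), so this is \emph{not} where Conjecture~\ref{conj:rigidity} earns its keep. Its essential role is the one you invoke earlier, namely the identification $U(1,n)=\text{eval}_n(U(1,X))$ for $n\ge 4$, without which statements about $U(1,X)\bmod(3X+\lambda')$ say nothing about $U(1,n)\bmod\lambda_n$.
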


\begin{proof}
Fix an $n \geq 4$ and define $\lambda_n = 3n + \lambda'$. Choose an $\epsilon > 0$. By Conjecture \ref{conj:stat}, we get that if $i$ is sufficiently large, then
	\begin{align*}
	a_in + b_i - \lambda_n \left\lfloor \frac{a_i}{3}\right\rfloor &= \left(a_i - 3\left\lfloor \frac{a_i}{3}\right\rfloor\right)n + \left(b_i - \lambda'\left\lfloor \frac{a_i}{3}\right\rfloor\right) \\
	&\in \left(n + \frac{\lambda'}{3} - \epsilon, n + \sigma_1 + \epsilon\right) \cup \left(2n + \sigma_2 - \epsilon, 2n + \frac{2\lambda'}{3} + \epsilon\right) \\
	&\in \left(\frac{\lambda_n}{3} - \epsilon, \frac{2\lambda_n}{3} + \epsilon\right).
	\end{align*}
	
\noindent Since this gives a real number between $0$ and $\lambda_n$, we conclude that for $i$ sufficiently large,
	\begin{align*}
	a_in + b_i \mod \lambda_n \in &\left(\frac{\lambda_n}{3} - \epsilon, \frac{2\lambda_n}{3} + \epsilon\right),
	\end{align*}
	
\noindent and similarly for $c_in + d_i$---thus, by Conjecture \ref{conj:rigidity}, it shall suffice to prove that for all $a_i n + b_i < u < c_i n + d_i$,
	\begin{align*}
	u \mod \lambda_n \in &\left(\frac{\lambda_n}{3} - \epsilon, \frac{2\lambda_n}{3} + \epsilon\right).
	\end{align*}
	
\noindent If $a_i = b_i$, then $u = a_i n + r$, where $b_i < r < d_i$. Therefore,
	\begin{align*}
	u - \lambda_n \left\lfloor \frac{u}{3}\right\rfloor	&= \left(a_i - 3\left\lfloor \frac{a_i}{3}\right\rfloor\right)n + \left(r - \lambda'\left\lfloor \frac{a_i}{3}\right\rfloor\right) \\
	&\in \left(\frac{\lambda_n}{3} - \epsilon, \frac{2\lambda_n}{3} + \epsilon\right),
	\end{align*}
	
\noindent hence we have the desired conclusion. If $a_i \neq b_i$, then $b_i = a_i + 1$, and therefore $(a_i,b_i) \mod 3 = (1,2)$. In that case, we know that
	\begin{align*}
	a_i n + b_i \mod \lambda_n &\in \left(n + \frac{\lambda'}{3} - \epsilon, n + \sigma_1 + \epsilon\right).
	\end{align*}
	
\noindent Since $u - (a_i n + b_i) < n$ and $2n + \sigma_1 + \epsilon < \lambda_n$ if $\epsilon$ is small enough, we conclude that
	\begin{align*}
	u \mod \lambda_n = \left(a_i n + b_i \mod \lambda_n\right) + u - a_i n + b_i,
	\end{align*}
	
\noindent whence
	\begin{align*}
	u \mod \lambda_n \in &\left(\frac{\lambda_n}{3} - \epsilon, \frac{2\lambda_n}{3} + \epsilon\right).
	\end{align*}
\end{proof}

\bibliography{UlamLibrary3}

\begin{thebibliography}{HKSS19b}

\bibitem[CF95]{cassaigne_finch_1995}
J.~Cassaigne and S.~R. Finch.
\newblock A class of 1-additive sequences and quadratic recurrences.
\newblock {\em Experimental Mathematics}, 4(1):49–60, 1995.

\bibitem[Fin91]{finch_1991}
S.~R. Finch.
\newblock Conjectures about $s$-additive sequences.
\newblock {\em Fibonacci Quarterly}, 29(3):209--214, 1991.

\bibitem[Fin92a]{finch_1992_2}
S.~R. Finch.
\newblock On the regularity of certain 1-additive sequences.
\newblock {\em Journal of Combinatorial Theory, Series A}, 60(1):123–130,
  1992.

\bibitem[Fin92b]{finch_1992_1}
S.~R. Finch.
\newblock Patterns in 1-additive sequences.
\newblock {\em Experimental Mathematics}, 1(1):57--63, 1992.

\bibitem[Gib15]{Gibbs_2015}
P.~Gibbs.
\newblock An efficient method for computing {Ulam} numbers.
\newblock \url{http://vixra.org/abs/1711.0134}, 2015.

\bibitem[GM17]{Gibbs_Judson_2015}
P.~Gibbs and J.~McCranie.
\newblock The {Ulam} numbers up to one trillion.
\newblock \url{http://vixra.org/abs/1508.0085}, 2017.

\bibitem[HKSS19a]{HKSS_2019_2}
J.~Hinman, B.~Kuca, A.~Schlesinger, and A.~Sheydvasser.
\newblock Rigidity of {Ulam} sets and sequences.
\newblock {\em Involve}, 12(3):521 -- 539, 2019.

\bibitem[HKSS19b]{HKSS_2019_1}
J.~Hinman, B.~Kuca, A.~Schlesinger, and A.~Sheydvasser.
\newblock The unreasonable rigidity of {Ulam} sequences.
\newblock {\em Journal of Number Theory}, 194:409 -- 425, 2019.

\bibitem[KS17]{kravitz_steinerberger_2017}
N.~{Kravitz} and S.~{Steinerberger}.
\newblock {Ulam} sequences and {Ulam} sets, 2017.

\bibitem[Que72]{queneau_1972}
R.~Queneau.
\newblock Sur les suites s-additives.
\newblock {\em Journal of Combinatorial Theory, Series A}, 12:31--71, 1972.

\bibitem[SS94]{schmerl_spiegel_1994}
J.~Schmerl and E.~Spiegel.
\newblock The regularity of some 1-additive sequences.
\newblock {\em Journal of Combinatorial Theory, Series A}, 66(1):172 -- 175,
  1994.

\bibitem[Ste17]{steinerberger_2016}
S.~Steinerberger.
\newblock A hidden signal in the {Ulam} sequence.
\newblock {\em Experimental Mathematics}, 26(4):460--467, 2017.

\bibitem[Ula64]{ulam_1964}
S.~Ulam.
\newblock Combinatorial analysis in infinite sets and some physical theories.
\newblock {\em SIAM Review}, 6(4):343–355, 1964.

\end{thebibliography}
\bibliographystyle{alpha}
\end{document}